\renewcommand{\theequation}{\arabic{section}.\arabic{equation}}
\newtheorem{theo}{Theorem}[section]
\newtheorem{lemme}[theo]{Lemma}
\newtheorem{lemmeA}{Lemma A.}
\newtheorem{propoA}{Proposition A.}
\newtheorem{nbA}{Remark A.}
\newtheorem{propo}[theo]{Proposition}
\newtheorem{hyp}[theo]{Assumptions}
\newtheorem{nb}[theo]{Remark}
\newtheorem{exa}[theo]{Example}
\theoremstyle{definition}
\def \leq {\leqslant}
\def \geq {\geqslant}
\numberwithin{equation}{section}
\def\ind#1{\lower5pt\hbox{$\scriptstyle #1$}}
\def \d {\, \mathrm{d} }
\def \H {\mathcal{H}}
\def \bH {\mathbf{H}}
\def \e {\mathrm{e}}
\def\Q {\mathcal{Q}}
\def\R{{\mathbb R}}
\def \S {{\mathbb S}^2}
\def \E {\mathcal{E}}
\def \n {\widehat{n}}
\def \v {{v}}
\def \vb {\v_{\star}}
\newcommand{\RN}{{\mathbb{R}^n}}
\def \IR {\int_{\R^3}}
\def \IRR {\int_{\R^3 \times \R^3}}
\title[]
{{Two proofs of Haff's law for dissipative gases: the use of entropy and the weakly inelastic regime}}
\author{Ricardo J. Alonso \& Bertrand Lods}
\address{\textbf{Ricardo J. Alonso}, Department of Computational and Applied Mathematics, Rice University, Houston, TX 77005-1892.}
\email{ralonso@math.utexas.edu}
\address{\textbf{Bertrand Lods},  Dipartimento di Statistica e Matematica Applicata, Collegio Carlo Alberto, Universit\`{a} degli
Studi di Torino,  Corso Unione Sovietica, 218/bis, 10134 Torino, Italy.}\email{bertrand.lods@math.univ-bpclermont.fr}
\thanks{R. Alonso acknowledges the support from NSF grant DMS-0439872 and ONR grant N000140910290.}
\begin{document}

\maketitle

\begin{abstract} We revisit our recent contribution \cite{AloLo1} and give two simpler proofs of the so-called Haff's law for granular gases (with non-necessarily constant restitution coefficient). The first proof is based upon the use of entropy and asserts that Haff's law holds whenever the initial datum is of finite entropy. The second proof uses only the moments of the solutions and holds in some weakly inelasticity regime which has to be clearly defined whenever the restitution coefficient is non-constant.\\
\textsc{Keywords:} Boltzmann equation, inelastic hard spheres, granular gas, cooling rate, Haff's law.\\
\textsc{AMS subject classification:}  76P05, 76P05, 47G10, 82B40, 35Q70, 35Q82.
\end{abstract}
 \medskip
\section{Introduction}
\label{intro}
\setcounter{equation}{0}
The main objective of the present paper is to revisit our recent contribution \cite{AloLo1} and give two simpler proofs of the so-called Haff's law for granular gases (with non-necessarily constant restitution coefficient). The first proof is based upon the use of Boltzmann's entropy and asserts that Haff's law holds whenever the initial datum is of finite entropy. The second proof uses only the moments of the solutions and shows that Haff's law holds in some weakly inelasticity regime (see Theorem \ref{main} for a precise definition in the case of non-constant restitution coefficient) for initial datum with finite energy.
\subsection{Motivation} We consider in this paper freely cooling granular gases governed by the spatially homogeneous Boltzmann equation
\begin{equation}\label{cauch}\begin{cases}
\partial_t f(t,v)&=\Q_{e}(f,f)(t,v) \qquad \qquad t >0, \; v \in \mathbb{R}^3\\
f(0,v)&=f_0(v), \qquad \qquad  v \in \mathbb{R}^3
\end{cases}
\end{equation}
where the initial datum $f_0$ is a nonnegative velocity distribution such that
\begin{equation}\label{initial}\IR f_0(v)\d v=1, \quad \IR f_0(v) v \d v =0 \quad \text{ and } \quad \IR f_0(v)|v|^2 \d v < \infty.\end{equation}
The operator $\Q_e(f,f)$ is the
inelastic Boltzmann collision operator, expressing the effect of
binary collisions of particles. We assume here that the granular particles are perfectly smooth hard-spheres of mass
$m=1$. The
inelasticity of the collision mechanism is characterized by a single scalar
parameter known as the coefficient of normal restitution \(0 \leq
e \leq 1\).  Indeed, if $v$ and $\vb$  denote the velocities of two particles before they collide, their respective velocities $v'$ and $\vb'$ after collisions are such that
\begin{equation}\label{coef}
(u'\cdot \n)=-(u\cdot \n) \,e,
\end{equation}
where $\n \in \mathbb{S}^2$  determines the impact direction, i.e. $\n$ stands for the unit vector that points from the $v$-particle center to the $\vb$-particle center at the instant of impact.  Here above
$$u=v-\vb \quad \text{and}\quad u'=v'-\vb',$$
denote respectively the relative velocity before and after collision.  In this work, the restitution coefficient $e$ is assumed to be a function of the impact velocity, i.e.
$$e:=e(|u \cdot \n|).$$
In virtue of \eqref{coef} and the conservation of momentum, the post-collision velocities $(v',\vb')$ are given by
\begin{equation}
\label{transfpre}
  v'=v-\frac{1+e}{2}\,(u\cdot \n)\n,
\qquad \vb'=\vb+\frac{1+e}{2}\,(u\cdot \n)\n.
\end{equation} The main assumptions on the function $e(\cdot)$ are listed here after (see Proposition \ref{HYP1}) and ensure that the Jacobian of the above transformation \eqref{transfpre} is given by
$$J_e(|u\cdot \n|)=\left|\dfrac{\partial v'\partial \vb'}{\partial v \partial \vb}\right|=e(|u\cdot \n|)+ |u\cdot \n|e'(|u\cdot \n|)=:\vartheta'_e(|u\cdot \n|)$$
where $e'(\cdot)$ and $\vartheta_e'(\cdot)$ denote the derivative of $r \mapsto e(r)$ and $r \mapsto \vartheta_e(r)$ respectively (this prime symbol should not be confused with the one  we have chosen for the post-collisional velocity). We refer the reader to \cite{AlonsoIumj} for more details.  The main examples of restitution coefficient we shall deal within this paper are the following:
\begin{enumerate}[(1)\;]
\item The first fundamental example is the one of a \textit{\textbf{constant restitution coefficient}} for which $e(r)=\e \in (0,1]$ for any $r \geq 0.$
\item The most physically relevant variable restitution coefficient is the one corresponding to the so-called \textit{\textbf{viscoelastic hard-spheres}} \cite{BrPo}. For such a model, the properties of the restitution coefficient have been derived in \cite{BrPo,PoSc} and it can be shown that $e(z)$ is   defined implicitly by the following
\begin{equation}\label{visco}e(r) + a r^{1/5} e(r )^{3/5}=1 \qquad \qquad \forall r \geq 0\end{equation}
where $a>0$ is a suitable positive constant depending on the material viscosity.
\end{enumerate}
\noindent
With the above notations, the Boltzmann collision operator is given, in weak form, by the following equation
\begin{multline}\label{weakn}
\IR \Q_e(f,f)\psi(v)\d v\\=\dfrac{1}{2\pi}\int_{\R^3 \times \R^3 \times \S} |u \cdot \n|f(v)f(\vb)\big(\psi(v')+\psi(\vb')-\psi(v)-\psi(\vb)\big)\d v \d\vb \d\n
\end{multline}
for any smooth test-function $\psi(v)$. The strong form of $\Q_e$ can be recovered easily (see \cite{AlonsoIumj}). Notice that an alternative parametrization of the post-collision velocities \eqref{transfpre} would lead to a slighty different weak formulation of the collision  operator (this alternative formulation  was preferred in \cite{AloLo1}).

\medskip
\noindent
As explained in \cite{AloLo1}, in absence of any heating source, the granular temperature
$$\E(t)=\IR f(t,v)|v|^2\d v, \qquad \quad t \geq 0$$
is continuously decreasing and tending to zero as time goes to infinity, expressing the \textit{cooling of the granular gases}. The precise cooling rate of the temperature is the main concern of this note.  It was proven in \cite{AloLo1}, and predicted by the physics literature long ago, that the cooling rate is strongly depending on the choice of the restitution coefficient.  Note that using the weak form \eqref{weakn} with the test function $\psi(v)=|v|^2$, the evolution of $\E(t)$ is governed by the following relation
\begin{equation}\label{temperature}\dfrac{\d }{\d t}\E(t)=- \IRR f(t,v)f(t,\vb)\mathbf{\Psi}_e(|u|^2)\d v\d\vb,
\end{equation}
where the dissipation energy potential associated to $e(\cdot)$ is given by
\begin{equation}\label{Psie}
\mathbf{\Psi}_e(r)=\frac{r^{3/2}}{2} \int_0^{1} \left(1-e^2(\sqrt{r}z) \right) z^3\d z \qquad \qquad r >0.\end{equation}
We refer to \cite{AloLo1} for technical details. In the \textit{op. cit.} we introduced the following general assumptions:
 \begin{hyp}\label{HYP1} The  restitution coefficient $e(\cdot)$ is such that the following hold:
 \begin{enumerate} [(1)\:]
\item The mapping  $r \in \mathbb{R}_+ \mapsto e(r) \in (0,1]$ is absolutely continuous.
\item The mapping $r\in\mathbb{R}^{+} \mapsto \vartheta(r):=r\;e(r)$ is strictly increasing.
\item $\limsup_{r \to \infty} e(r)=e_0 < 1.$
\item The function $x>0 \longmapsto \mathbf{\Psi}_e(x)$ defined in \eqref{Psie} is strictly increasing and convex over $(0,+\infty)$.
\end{enumerate}
\end{hyp}
\noindent
These assumptions are fulfilled by the two examples described above and, more generally, they hold whenever the restitution coefficient $r \mapsto e(r)$ is an absolutely continuous and non-increasing mapping (see \cite[Appendix]{AloLo1}). Notice that the first two assumptions are exactly those needed in order to compute the Jacobian of the transformation \eqref{transfpre}. The last two are needed in order to get the following proposition which is based on Jensen's inequality.
\begin{propo}\label{prop:cool} Let $f_0$ be a nonnegative velocity distribution satisfying \eqref{initial} and let $f(t,v)$ be the associated solution to the Cauchy problem \eqref{cauch} where the variable restitution coefficient satisfies Assumptions \ref{HYP1}. Then,
$$\dfrac{\d }{\d t}\E(t) \leq -\mathbf{\Psi}_e(\E(t)) \qquad \qquad \forall t \geq 0$$
and, as a consequence, $\lim_{t \to \infty}\E(t)=0.$ Moreover, if one assumes that there exist $\alpha >0$ and $\gamma \geq 0$ such that
\begin{equation}\label{smallez}e(z) \simeq 1-\alpha\, z^\gamma \quad \text{ for } \quad z \simeq 0\end{equation}
then there exist  $C >0$ and $t_0 >0$ such that
$$\E(t) \leq C\left(1+t\right)^{-\frac{2}{1+\gamma}} \qquad \qquad \forall t \geq t_0.$$
\end{propo}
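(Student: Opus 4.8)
The plan is to turn the energy identity \eqref{temperature} into a scalar differential inequality through Jensen's inequality, and then to integrate that inequality in the regime where $\E(t)$ is small.

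First I would observe that, the solution conserving mass, $\IR f(t,v)\d v=1$ for every $t\ge 0$, so that $f(t,v)f(t,\vb)\d v\,\d\vb$ is a probability measure on $\R^3\times\R^3$. Expanding $|u|^2=|v|^2-2\,v\cdot\vb+|\vb|^2$ and using that momentum is conserved and vanishes initially, $\IR f(t,v)\,v\,\d v=0$, one gets $\IRR f(t,v)f(t,\vb)\,|u|^2\,\d v\,\d\vb=2\,\E(t)$. Since $x\mapsto\mathbf{\Psi}_e(x)$ is convex by Assumptions \ref{HYP1}, Jensen's inequality applied to \eqref{temperature} gives
$$\dfrac{\d}{\d t}\E(t)=-\IRR f(t,v)f(t,\vb)\,\mathbf{\Psi}_e(|u|^2)\,\d v\,\d\vb\;\le\;-\,\mathbf{\Psi}_e\bigl(2\,\E(t)\bigr)\;\le\;-\,\mathbf{\Psi}_e(\E(t)),$$
the last step because $\mathbf{\Psi}_e$ is non-decreasing and $\E(t)\ge 0$; this is the first assertion. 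Moreover $\mathbf{\Psi}_e\ge 0$ by the formula \eqref{Psie} and $e\le 1$, so $\E$ is non-increasing and converges to some $\E_\infty\ge 0$ as $t\to\infty$. If $\E_\infty>0$, then, since $\mathbf{\Psi}_e$ is strictly increasing and nonnegative we have $\mathbf{\Psi}_e(\E_\infty)>0$, whence $\frac{\d}{\d t}\E(t)\le-\mathbf{\Psi}_e(\E(t))\le-\mathbf{\Psi}_e(\E_\infty)<0$ for all $t$, forcing $\E(t)\to-\infty$, a contradiction; hence $\E_\infty=0$.

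For the quantitative rate I would first extract a lower bound for $\mathbf{\Psi}_e$ near the origin from \eqref{smallez}: writing $1-e^2(\sqrt r\,z)=\bigl(1-e(\sqrt r\,z)\bigr)\bigl(1+e(\sqrt r\,z)\bigr)$, inserting the expansion \eqref{smallez}, and integrating the resulting lower bound against $z^3\,\d z$ over $(0,1)$ yields constants $\kappa,r_0>0$ with $\mathbf{\Psi}_e(r)\ge\kappa\,r^{(3+\gamma)/2}$ for $0\le r\le r_0$. Since $\E(t)\to 0$, there is $t_0$ with $\E(t)\le r_0$ for $t\ge t_0$, hence $\frac{\d}{\d t}\E(t)\le-\kappa\,\E(t)^{(3+\gamma)/2}$ on $[t_0,\infty)$. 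With $p:=(3+\gamma)/2>1$ this is the standard scalar comparison: integrating $\frac{\d}{\d t}\bigl(\E(t)^{1-p}\bigr)\ge\kappa(p-1)$ from $t_0$ to $t$ gives $\E(t)\le\bigl(\kappa(p-1)(t-t_0)\bigr)^{-1/(p-1)}$, and since $\tfrac1{p-1}=\tfrac{2}{1+\gamma}$, a harmless adjustment of the constant over the compact interval $[t_0,2t_0]$ produces $\E(t)\le C(1+t)^{-2/(1+\gamma)}$ for $t\ge t_0$.

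The one step deserving care is the passage from the \emph{pointwise} asymptotics \eqref{smallez} as $z\to 0$ to the lower bound on $\mathbf{\Psi}_e(r)$ as $r\to 0$: one has to make sure the expansion $e(z)\simeq 1-\alpha z^\gamma$ holds uniformly on a fixed interval $[0,\delta]$ and that $\sqrt r\,z\in[0,\delta]$ for all $z\in[0,1]$ once $r\le\delta^2$, so that the integral defining $\mathbf{\Psi}_e$ in \eqref{Psie} can be bounded below directly. All the rest is either a direct use of Assumptions \ref{HYP1}, the conservation laws, or a textbook scalar ODE comparison.
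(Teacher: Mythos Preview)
Your argument is correct and follows exactly the route the paper indicates: apply Jensen's inequality to the convex function $\mathbf{\Psi}_e$ against the probability measure $f(t,v)f(t,\vb)\d v\,\d\vb$ (using mass and momentum conservation to compute the mean of $|u|^2$), then integrate the resulting scalar differential inequality after exploiting the asymptotics \eqref{smallez} to bound $\mathbf{\Psi}_e(r)$ from below near the origin. The paper does not spell out the proof here but refers to \cite{AloLo1}, and your write-up is precisely the standard execution of that Jensen-plus-ODE-comparison strategy.
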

\begin{nb} The well-posedness of the Cauchy problem \eqref{cauch} has been proved in \cite{MMR}.
\end{nb}
\begin{nb} Notice that the above assumption \eqref{smallez} is equivalent to assume that
\begin{equation}\label{ell}\ell_\gamma(e)=\sup_{r >0} \dfrac{1-e(r)}{r^\gamma} < \infty \end{equation}
since, for large value of $r >0$, $(1-e(r))/r^\gamma$ is clearly finite for any $\gamma >0$.  In particular, for a constant restitution coefficient $e(r)=e_0$, one has $\ell_{0}(e) = 1-e_0$ which means that \eqref{ell} holds.  For the model of viscoelastic hard-spheres given by \eqref{visco}, the restitution coefficient $e(\cdot)$ is such that $\ell_{1/5}(e)= a$.  Furthermore, if $\ell_\gamma(e) < \infty$ for some $\gamma >0$, then $\ell_\alpha(e)=\infty$ for any $\alpha \neq \gamma$. Indeed, the parameter $\gamma$ is exactly the one that prescribes the behavior of $e(r)$ for \textit{\textbf{small values}} of $r$.\end{nb}
\noindent
Proposition \ref{prop:cool} illustrates the fact that the decay of the temperature is governed by the behavior of the restitution coefficient $e(r)$ for small value of $r$. Now, in order to match the precise cooling rate of the temperature and prove the so-called generalized Haff's law, one needs to prove that, under Assumptions \ref{HYP1} and \eqref{ell}, there exists $c>0$ such that
\begin{equation}\label{converseE}
\E(t) \geq c(1+t)^{-\frac{2}{1+\gamma}}\qquad \forall t \geq 0.
\end{equation}
This was precisely the main objective in \cite{AloLo1} and, as far as the cooling rate is concerned, the main result of the \textit{op. cit.} can be formulate as
\begin{theo}\label{haff1} For any initial distribution velocity $f_0 \geq 0$ satisfying the  conditions given by \eqref{initial} with $f_0 \in L^{p}(\R^3)$ for some $1 < p < \infty$, the solution $f(t,v)$ to the associated Boltzmann equation \eqref{cauch} satisfies the generalized Haff's law for variable restitution coefficient $e(\cdot)$ fulfilling Assumptions \ref{HYP1} and  \eqref{ell}:
\begin{equation}\label{Haff's}
c (1+t)^{-\frac{2}{1+\gamma}}  \leq \E(t) \leq  C (1+t)^{-\frac{2}{1+\gamma}}, \qquad t \geq 0
\end{equation}
where $c,C$ are positive constants.
\end{theo}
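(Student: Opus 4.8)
The plan is to prove the two inequalities in \eqref{Haff's} separately, the upper one being essentially free and the lower one \eqref{converseE} being the real point. Since condition \eqref{ell} is equivalent to the expansion \eqref{smallez}, Proposition \ref{prop:cool} already yields $\E(t)\le C(1+t)^{-\frac{2}{1+\gamma}}$ for $t\ge t_0$; because $\E$ is non-increasing and $\E(0)<\infty$ by \eqref{initial}, the same bound extends to all of $[0,\infty)$ after enlarging $C$. So everything reduces to the lower bound.

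First I would turn the lower bound into a closed differential inequality for $\E$. Writing $1-e^2(s)=(1-e(s))(1+e(s))\le 2\,\ell_\gamma(e)\,s^{\gamma}$ in \eqref{Psie} gives
\[
\mathbf{\Psi}_e(r)\le\frac{\ell_\gamma(e)}{\gamma+4}\,r^{\frac{3+\gamma}{2}}\qquad(r>0),
\]
hence $\mathbf{\Psi}_e(|u|^2)\le C\,\ell_\gamma(e)\,|u|^{3+\gamma}$. Inserting this into \eqref{temperature} and using $|v-\vb|^{3+\gamma}\le 2^{2+\gamma}\bigl(|v|^{3+\gamma}+|\vb|^{3+\gamma}\bigr)$ together with the normalization $\IR f(t,v)\,\d v=1$ from \eqref{initial}, one gets
\[
-\frac{\d}{\d t}\E(t)\le C_\gamma\,\ell_\gamma(e)\,M_{3+\gamma}(t),\qquad M_s(t):=\IR f(t,v)\,|v|^s\,\d v .
\]
Thus \eqref{converseE} follows at once from the \emph{relative moment bound}
\[
M_{3+\gamma}(t)\le K\,\E(t)^{\frac{3+\gamma}{2}},\qquad t\ge t_0 :
\]
indeed then $-\frac{\d}{\d t}\E\le C_\gamma\ell_\gamma(e)K\,\E^{\frac{3+\gamma}{2}}$, and since $\frac{3+\gamma}{2}>1$ one has $\frac{\d}{\d t}\E^{-\frac{1+\gamma}{2}}=\frac{1+\gamma}{2}\bigl(-\tfrac{\d}{\d t}\E\bigr)\E^{-\frac{3+\gamma}{2}}\le\text{const}$, so $\E(t)^{-\frac{1+\gamma}{2}}\le\E(t_0)^{-\frac{1+\gamma}{2}}+\text{const}\cdot(t-t_0)\le c^{-1}(1+t)$ on $[t_0,\infty)$; positivity and continuity of $\E$ then extend the estimate to $[0,t_0]$.

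The main obstacle is precisely the relative moment bound, and this is where the hypothesis $f_0\in L^p(\R^3)$ is used. I would argue in self-similar variables: set $V(t):=\sqrt{\E(t)}$ and $g(t,w):=V(t)^{3}f\bigl(t,V(t)w\bigr)$, so that $g$ has unit mass and unit energy for every $t$, and perform the time substitution $\d\tau=V(t)\,\d t$. Then $g$ solves a Boltzmann-type equation with the same collision operator $\Q_e$ plus an extra anti-drift term $a(\tau)\,\nabla_w\cdot(w\,g)$, whose coefficient $a=\dot\E/(2\E^{3/2})$ satisfies $|a(\tau)|\le\tfrac18$ uniformly in $\tau$ thanks to Proposition \ref{prop:cool} (because $-\dot\E\le\mathbf{\Psi}_e(\E)$ and $\mathbf{\Psi}_e(r)\le r^{3/2}/8$). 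On this rescaled equation one runs the now-standard program: appearance and uniform-in-time propagation of $L^p$ norms for $g$ — the step that genuinely needs $f_0\in L^p$, and that forces one to control the gain part of $\Q_e$ in $L^p$, handling the Jacobian $\vartheta_e'$ and the change of variables for a non-constant restitution coefficient — followed by Povzner-type estimates for $\Q_e$ and a maximum-principle argument producing uniform bounds $\int g(\tau,w)|w|^{s}\,\d w\le C_s$ for every $s\ge2$. Undoing the scaling, $M_s(t)=V(t)^{s}\int g(\tau,w)|w|^s\,\d w\le C_s\,\E(t)^{s/2}$, which is exactly the required bound with $s=3+\gamma$. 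The delicate points are this uniform $L^p$ propagation for the rescaled equation and the bookkeeping attached to the variable restitution coefficient; the full technical details are those of \cite{AloLo1}.
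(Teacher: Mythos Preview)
Your outline matches the paper's own account of the proof from \cite{AloLo1}: the upper bound via Proposition~\ref{prop:cool}, the reduction of \eqref{converseE} to the relative moment control $m_{\frac{3+\gamma}{2}}(t)\le K\,\E(t)^{\frac{3+\gamma}{2}}$ (exactly the computation the paper records just before Proposition~\ref{propstrat}), and the three ingredients---Povzner moment inequalities, uniform $L^p$ propagation exploiting $f_0\in L^p$, and passage to self-similar variables---are precisely the steps the paper lists.

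One concrete slip, though: your justification for $|a(\tau)|\le\tfrac18$ is backwards. Proposition~\ref{prop:cool} gives $\dot\E\le-\mathbf{\Psi}_e(\E)$, i.e.\ a \emph{lower} bound $-\dot\E\ge\mathbf{\Psi}_e(\E)$ on the dissipation (it comes from Jensen), not the upper bound you quote. An upper bound on $-\dot\E$ comes instead from inserting $\mathbf{\Psi}_e(|u|^2)\le|u|^3/8$ into \eqref{temperature}, which yields $-\dot\E\le m_{3/2}(t)$ (cf.\ \eqref{dEmm32}); after your rescaling this reads $|a|\lesssim\int g(\tau,w)|w|^3\d w$. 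So the bound on the drift coefficient is itself a moment bound on the rescaled solution $g$ and has to be obtained together with the Povzner/$L^p$ bootstrap in the rescaled equation, not fed in beforehand from Proposition~\ref{prop:cool}. This is exactly one of the ``delicate points'' you flag at the end, so the overall plan is sound; but the specific sentence you wrote for bounding $a$ does not work as stated.
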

\begin{nb} An additional assumption was required in theorem \ref{haff1} on the restitution coefficient $e(\cdot)$, see \cite[Assumption 4.10]{AloLo1}.  We do not insist on this point since we believe such assumption is only of technical nature and likely unnecessary.
\end{nb}
\noindent
For constant restitution coefficient, Haff's law has been proved in \cite{MiMo}. This approach was generalized in \cite{AloLo1} leading to Theorem \ref{haff1}.  The proof is based on the following steps:
\begin{enumerate}[(1)\:]
 \item The study of the moments of solutions to the Boltzmann equation using a generalization of the Povzner's lemma developed in \cite{BoGaPa}.
 \item Precise $L^p$ estimates, in the same spirit of \cite{MiMo}, of the solution to the Boltzmann equation for $1<p<\infty$.
 \item A study of the problem \eqref{cauch} in self-similar variable (that is, for suitable \textit{rescaled solutions}).
\end{enumerate}
Because of the method of proof, step $(2)$ hereabove, the above result requires strong integrability assumption on the initial density $f_0$ which has to belong to some $L^{p}$ space with $p  >1.$ The main purpose of this paper is to remove the unphysical assumption
\begin{equation}\label{f0p}
f_0 \in L^p \qquad \text{ for some } p >1,
\end{equation}
and prove that the generalized Haff's law still holds under less restrictive assumptions.
\subsection{Main results} We present two independent treatments of the above problem:
\begin{enumerate}[(1)\;]
\item We prove that Haff's law still holds if we replace \eqref{f0p} by the less restrictive constraint
$$\H(f_0)=\IR f_0(v)\log f_0(v)\d v < \infty$$
and $e(\cdot)$ satisfying \eqref{ell} for some $\gamma >0$.
\item Using only finiteness of mass and energy on the initial datum, we prove that Haff's law \eqref{Haff's} holds true in some weakly inelastic regime defined in the sequel.  We notice that for constant restitution coefficient, the proof of Haff's law given in \cite{MiMo} required the assumption $f_0 \in L^p$ with $p >1$. However, it was observed in \cite{AloLo1} that, for this case, Haff's law holds assuming only that the restitution coefficient is sufficiently close to one.  We give a complete proof of this fact in the sequel.
\end{enumerate}
More precisely, the  main results of the present paper can be stated in the following theorems.
\begin{theo}
\label{main0} Let $e(\cdot)$ be a non-constant restitution coefficient that satisfies Assumptions \ref{HYP2} below.  Furthermore, assume that the initial distribution $f_0$ satisfies \eqref{initial} together with $\H(f_0) < \infty$, and let $f(t,v)$ be the unique solution to \eqref{cauch}. Then, the generalized Haff's law \eqref{Haff's} holds true.
\end{theo}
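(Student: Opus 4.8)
Since the upper bound in \eqref{Haff's} is already contained in Proposition \ref{prop:cool} (Assumptions \ref{HYP2} being designed so as to imply Assumptions \ref{HYP1} and a behaviour of type \eqref{smallez}--\eqref{ell} for some $\gamma>0$), the whole content of the theorem is the converse bound \eqref{converseE}, and I would work only on it; the range $0\le t\le t_0$ is harmless because $\E$ is non-increasing. \textbf{Reduction to a rescaled moment estimate.} From the very definition of $\ell_\gamma(e)$ one has $1-e^2(r)\le 2\ell_\gamma(e)\,r^{\gamma}$ for all $r\ge0$, whence by \eqref{Psie}, $\mathbf{\Psi}_e(r)\le\frac{\ell_\gamma(e)}{4+\gamma}r^{(3+\gamma)/2}$. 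Plugging this into \eqref{temperature}, using $|u|^{3+\gamma}\le 2^{2+\gamma}(|v|^{3+\gamma}+|\vb|^{3+\gamma})$ and $\IR f\,\d v=1$, yields
\[
-\frac{\d}{\d t}\E(t)\ \le\ \frac{2^{3+\gamma}\ell_\gamma(e)}{4+\gamma}\,m_{3+\gamma}(t),\qquad m_s(t):=\IR f(t,v)\,|v|^s\,\d v.
\]
It is therefore enough to prove a bound $m_{3+\gamma}(t)\le C\,\E(t)^{(3+\gamma)/2}$ for all large $t$, since the ensuing inequality $\frac{\d}{\d t}\E\ge-C'\E^{(3+\gamma)/2}$ integrates into \eqref{converseE}.

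\textbf{Self-similar variables.} Let $V(t)=\E(t)^{1/2}$, $\frac{\d\tau}{\d t}=V(t)$ and $g(\tau,\xi)=V(t)^3f(t,V(t)\xi)$, so that $\IR g(\tau)\,\d\xi=\IR g(\tau)\,|\xi|^2\,\d\xi=1$ and
\[
\partial_\tau g+c(\tau)\,\nabla_\xi\!\cdot(\xi g)=\widetilde{\Q}_{e_\tau}(g,g),\qquad e_\tau(r):=e\bigl(V(t)\,r\bigr),\qquad c(\tau):=\frac{-\dot{\E}(t)}{2\,\E(t)^{3/2}}\ \ge\ 0 .
\]
Because $V(t)\to0$, the rescaled coefficient satisfies $e_\tau\to1$ locally uniformly, so $\widetilde{\Q}_{e_\tau}$ is asymptotically elastic; moreover $m_{3+\gamma}(t)=\E(t)^{(3+\gamma)/2}M_{3+\gamma}(\tau)$ with $M_s(\tau):=\IR g(\tau,\xi)\,|\xi|^s\,\d\xi$, and the previous step gives $c(\tau)\le\frac{2^{3+\gamma}\ell_\gamma(e)}{2(4+\gamma)}V(t)^{\gamma}M_{3+\gamma}(\tau)$. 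Hence the proof reduces to establishing $\sup_\tau M_{3+\gamma}(\tau)<\infty$.

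\textbf{Moment bounds and the role of entropy.} From the moment identity $\frac{\d}{\d\tau}M_s=c(\tau)\,s\,M_s+\IR|\xi|^s\,\widetilde{\Q}_{e_\tau}(g,g)\,\d\xi$ and the generalized Povzner inequality of \cite{BoGaPa} — whose constants stay uniform as $e_\tau\to1$ and which applies to the fixed coefficient on the initial compact time interval thanks to Assumptions \ref{HYP2} — the collision term is bounded above by $-\kappa_s M_{s+1}+(\text{a bilinear expression in moments of order}<s)$, where the dissipative constant $\kappa_s>0$ needs only the elementary lower bound $\int_{|\xi|\le4}g(\tau)\,\d\xi\ge\tfrac34$, itself a consequence of $\IR g(\tau)|\xi|^2\,\d\xi=1$. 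What prevents one from closing this differential inequality directly is the anti-drift term $c(\tau)\,s\,M_s$, whose coefficient is controlled only in terms of $M_{3+\gamma}$ — a feedback the crude estimates above cannot absorb. This is precisely where the finiteness of the entropy is used, replacing the $L^p$ estimates of \cite{AloLo1}. One propagates $\H(f(t))<\infty$ for all $t$; using $\H(g(\tau))=\H(f(t))+\tfrac32\log\E(t)$, the $H$-theorem (the elastic entropy dissipation is $\le0$), and a control of the inelastic entropy correction by the anti-drift entropy dissipation $3c(\tau)$ up to a carefully controlled error, one obtains $\sup_\tau\H(g(\tau))<\infty$ starting from $\H(g(0))=\H(f_0)+\tfrac32\log\E_0<\infty$. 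Together with the normalisations $\IR g(\tau)\,\d\xi=\IR g(\tau)|\xi|^2\,\d\xi=1$, a uniform bound on $\H(g(\tau))$ makes the family $\{g(\tau)\}_\tau$ uniformly integrable (de la Vallée--Poussin) and rules out its concentration; this non-degeneracy is exactly the input needed to run a continuity/bootstrap argument producing $\sup_\tau M_{3+\gamma}(\tau)<\infty$ (and, consequently, $c(\tau)\to0$). By the two previous steps this gives $m_{3+\gamma}(t)\le C\,\E(t)^{(3+\gamma)/2}$ for large $t$, hence \eqref{converseE}, and with Proposition \ref{prop:cool} the theorem.

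I expect the main difficulty to be the last step: propagating a uniform-in-time bound on the rescaled entropy for the genuinely inelastic equation, and turning it into a \emph{quantitative} non-concentration estimate strong enough to decouple the moment bounds from the entropy bound through the cooling rate $c(\tau)$ (the two being a priori linked). A secondary technical point is the $\tau$-dependence of $\widetilde{\Q}_{e_\tau}$ for a truly non-constant restitution coefficient, which requires the Povzner (and entropy-production) estimates to be controlled uniformly along the flow as $e_\tau\to1$.
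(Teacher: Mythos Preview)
Your route is substantially more involved than the paper's, and the crucial step you yourself flag as the main difficulty is a genuine gap. The paper's argument avoids self-similar variables entirely; indeed one of its stated purposes is precisely to dispense with the rescaling step of \cite{AloLo1}. The scheme is as follows. First, working in the \emph{original} variables, one bounds the entropy production by splitting $\mathcal{S}_e(f)=-\mathcal{D}_e(f)+\iint |u|\mathbf{\Phi}_e(|u|)ff_*\,\d v\,\d v_*$ with $\mathcal{D}_e\ge0$; under Assumptions \ref{HYP2} one has $\mathbf{\Phi}_e(|u|)\le A|u|^\gamma+B|u|^{2(m-1)}$, so $\tfrac{\d}{\d t}\H(f(t))$ is dominated by moments of order $\tfrac{1+\gamma}{2}$ and $m-\tfrac12$, which by Proposition \ref{moments} decay like $(1+t)^{-1}$ and faster. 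Integrating, $\H(f(t))\le\H(f_0)+C_0\log(1+t)$. Second, a static functional inequality (Proposition \ref{propcontrol}) gives, for any unit-mass $f$, $\IR f|v|^2\,\d v\ge C\exp(-\tfrac43\bH(f))$ together with $\bH(f)\le\H(f)+c\,\E(f)^{5/3}$. Combining the two yields directly $\E(t)\ge c_2(1+t)^{-2\lambda_0}$ for some explicit $\lambda_0>0$. Third, and this is the point you missed, Proposition \ref{lambda} (valid precisely because $\gamma>0$) upgrades \emph{any} algebraic lower bound on $\E(t)$ to the optimal rate $(1+t)^{-2/(1+\gamma)}$. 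No control of $m_{(3+\gamma)/2}$ in terms of $\E^{(3+\gamma)/2}$ is ever needed in this argument.

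By contrast, your proposal hinges on $\sup_\tau\H(g(\tau))<\infty$, where $\H(g(\tau))=\H(f(t))+\tfrac32\log\E(t)$. Since $\log\E(t)\to-\infty$, this amounts to showing that $\H(f(t))$ grows no faster than $-\tfrac32\log\E(t)$, i.e.\ essentially an a priori lower bound on $\E(t)$ of the very type you are trying to prove; the sketched balance between the anti-drift term $-3c(\tau)$ and the inelastic entropy correction $I_{e_\tau}(g)$ does not obviously close (both scale with the inelasticity but with different kernels), and you would need a quantitative comparison that you do not supply. Even granting that step, the passage from uniform integrability of $g(\tau)$ to $\sup_\tau M_{3+\gamma}(\tau)<\infty$ via a ``continuity/bootstrap'' is not an argument but a hope: uniform integrability prevents concentration at a point, not escape of mass to moderate $|\xi|$, and the Povzner machinery needs a moment input, not an entropy input, to close. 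In short, you have reconstructed the difficulties of the $L^p$/self-similar approach of \cite{AloLo1} rather than exploited the shortcut that the entropy affords; the decisive observation is that, for $\gamma>0$, a crude algebraic lower bound suffices (Proposition \ref{lambda}), and such a bound follows in two lines from the logarithmic growth of $\H(f(t))$ and the inequality \eqref{control}.
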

\noindent
For constant restitution coefficient, our result is weaker, however, we give a qualitative version of Haff's law in this case indicating an algebraic rate of decrease for the temperature, see Theorem \ref{entrHaff1}.

\medskip

\noindent
Referring to the point $(2)$ we state two different results, distinguishing the constant and non-constant cases.
\begin{theo}\label{mainc} Let $f_0$ be a nonnegative velocity distribution satisfying \eqref{initial} and let $f(t,v)$ be the associated solution to \eqref{cauch}. Assume that the  restitution coefficient $e(\cdot)$ is constant $e(r)=\e \in (0,1)$ and such that
\begin{equation*}\label{small*}\frac{3(1-\e^2)}{8} < 1-\kappa_{3/2}\end{equation*}
where the expression of $\kappa_{3/2}$ is given in Prop. \ref{povzner}. Then the generalized Haff's law \eqref{Haff's} holds true.
\end{theo}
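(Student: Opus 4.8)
The plan is to reduce the generalized Haff's law \eqref{Haff's} to a single a priori bound on the \emph{normalized third moment} of the solution, and then to derive that bound from a scalar Riccati-type differential inequality whose leading coefficient is negative \emph{exactly} under the stated smallness condition. For a constant restitution coefficient $e\equiv\e$ one has $1-e(z)\equiv 1-\e$, so \eqref{smallez} holds with $\gamma=0$ and $\alpha=1-\e$, and the upper bound in \eqref{Haff's} is already provided by Proposition \ref{prop:cool} (the exponent being $2/(1+\gamma)=2$). The whole content is therefore the lower bound \eqref{converseE} with $\gamma=0$, i.e. the existence of $c>0$ with $\E(t)\ge c(1+t)^{-2}$. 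For a constant coefficient $\mathbf{\Psi}_\e(r)=\tfrac{1-\e^2}{8}\,r^{3/2}$, so \eqref{temperature} reads
\begin{equation}\label{eq:edotc}
\frac{\d}{\d t}\E(t)=-\frac{1-\e^2}{8}\IRR f(t,v)f(t,\vb)\,|v-\vb|^3\,\d v\,\d\vb .
\end{equation}
With $m_3(t)=\IR f(t,v)\,|v|^3\,\d v$, the bound $|v-\vb|^3\le(|v|+|\vb|)^3$ together with $m_1(t)\le\E(t)^{1/2}$ (Cauchy--Schwarz and $\IR f_0=1$) show that the integral in \eqref{eq:edotc} is at most $2m_3(t)+6\,\E(t)^{3/2}$. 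Hence it suffices to prove
\begin{equation}\label{eq:m3}
m_3(t)\le C\,\E(t)^{3/2}\qquad\text{for all } t\ge t_0
\end{equation}
for some $t_0>0$, $C>0$: then $\frac{\d}{\d t}\E\ge-K\E^{3/2}$ with $K=\tfrac{(1-\e^2)(2C+6)}{8}$, so $\frac{\d}{\d t}\E^{-1/2}\le K/2$, and integration from $t_0$ gives $\E(t)\ge c(1+t)^{-2}$ for $t\ge t_0$; on $[0,t_0]$ the inequality is automatic since, by the well-posedness theory \cite{MMR}, $\E$ is continuous and does not vanish in finite time, hence $\inf_{[0,t_0]}\E>0$.

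To prove \eqref{eq:m3}, introduce $Y(t)=m_3(t)\,\E(t)^{-3/2}$, which is the third moment of the self-similar profile $g$ obtained by rescaling $f(t,\cdot)$ to unit mass, zero momentum and unit energy; write $M_k$ for the $k$-th moment of $g$, so $M_2=1$, $M_1\le1$, $M_3=Y$ and $M_4\ge M_3^2$ by Cauchy--Schwarz. Using \eqref{weakn} with $\psi(v)=|v|^3$, performing the angular integration over $\n\in\S$, and invoking the Povzner-type estimate of Proposition \ref{povzner} (where $\kappa_{3/2}$ is defined and where the inelastic post-collisional velocities \eqref{transfpre} enter), one arrives, after rescaling velocities by $\E(t)^{1/2}$, at an inequality of the form
\begin{equation}\label{eq:m3dot}
\frac{\d}{\d t}m_3(t)\ \le\ \E(t)^{2}\Big[-(1-\kappa_{3/2})\,\mathbf{D}[g]+\kappa_{3/2}\,\mathbf{R}[g]\Big],
\end{equation}
where $\mathbf{D}[g]=\IRR g(w)g(w_*)\,|w-w_*|\,(|w|^3+|w_*|^3)\,\d w\,\d w_*$ is coercive — a Chebyshev argument ($|w-w_*|\ge|w|/2$ on $\{|w_*|\le|w|/2\}$, a set of $g$-mass $\ge 1-4|w|^{-2}$) gives $\mathbf{D}[g]\ge M_4-4\ge Y^2-4$ — while $\mathbf{R}[g]$ is a finite combination of products $M_aM_b$ with $a+b=4$, $a,b\ge1$, hence $\mathbf{R}[g]\le C_0(1+Y)$ for a universal $C_0$. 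Differentiating $Y=m_3\,\E^{-3/2}$ and using \eqref{eq:edotc} in the form $\frac{\d}{\d t}\log\E=-\tfrac{1-\e^2}{8}\,\E^{1/2}\,\mathcal D[g]$, with $\mathcal D[g]=\IRR gg\,|w-w_*|^3$ satisfying $2\sqrt2\le\mathcal D[g]\le 2Y+6$ (lower bound by Jensen, since $\IRR gg|w-w_*|^2=2$ by conservation of mass, momentum and energy), one obtains
\begin{equation}\label{eq:ydot}
\frac{\d}{\d t}Y(t)=\E(t)^{-3/2}\,\frac{\d}{\d t}m_3(t)+\frac{3(1-\e^2)}{16}\,\E(t)^{1/2}\,\mathcal D[g]\,Y(t).
\end{equation}
Substituting \eqref{eq:m3dot} and the above bounds into \eqref{eq:ydot} — the coercive term contributing $-(1-\kappa_{3/2})(Y^2-4)=-(1-\kappa_{3/2})Y^2+O(1)$ and the ``anti-drift'' term $\tfrac{3(1-\e^2)}{16}\mathcal D[g]Y\le\tfrac{3(1-\e^2)}{8}Y^2+O(Y)$ coming from the $\E^{3/2}$-normalization — yields the closed scalar inequality
\begin{equation}\label{eq:riccati}
\frac{\d}{\d t}Y(t)\ \le\ \E(t)^{1/2}\left[\Big(\frac{3(1-\e^2)}{8}-(1-\kappa_{3/2})\Big)Y(t)^2+C_1\,Y(t)+C_2\right],
\end{equation}
with $C_1,C_2>0$ depending only on $\e$.

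Under the hypothesis $\tfrac{3(1-\e^2)}{8}<1-\kappa_{3/2}$ the leading coefficient in \eqref{eq:riccati} equals $-\delta$ with $\delta>0$, so $\frac{\d}{\d t}Y\le\E^{1/2}(-\delta Y^2+C_1Y+C_2)$. As $\E(t)^{1/2}>0$, this is a barrier inequality: $Y(t)\le\max\{Y(t_0),Y^\ast\}$ for all $t\ge t_0$, where $Y^\ast$ is the larger root of $\delta X^2-C_1X-C_2=0$, provided $Y(t_0)=m_3(t_0)\E(t_0)^{-3/2}<\infty$ for some $t_0>0$. The latter holds because solutions of the inelastic hard-sphere equation instantaneously acquire all polynomial moments (cf. \cite{MMR,BoGaPa}). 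This establishes \eqref{eq:m3}, hence the lower bound in \eqref{Haff's}, and the proof is complete.

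The step I expect to be the main obstacle is Proposition \ref{povzner}: one must control $\tfrac{1}{2\pi}\int_\S|u\cdot\n|\big(|v'|^3+|\vb'|^3\big)\,\d\n$ for the inelastic velocities \eqref{transfpre}, separate the genuinely coercive part from the mixed remainder $\mathbf{R}$, and normalize $\kappa_{3/2}$ so that the coercive contribution to \eqref{eq:riccati} is exactly $-(1-\kappa_{3/2})Y^2$ — which is precisely what makes the threshold come out as $\tfrac{3(1-\e^2)}{8}<1-\kappa_{3/2}$ and be matched by the anti-drift term $\tfrac{3(1-\e^2)}{8}Y^2$ generated by the $\E^{3/2}$-normalization. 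A secondary point is checking that $\mathbf{R}[g]$ and the linear part of the anti-drift enter only at orders $Y$ and $Y^0$, so that \eqref{eq:riccati} genuinely closes and the barrier argument applies, together with the facts (used above) that for $t>0$ the energy is positive and the third moment finite.
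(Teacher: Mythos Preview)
Your proof is correct and follows essentially the same strategy as the paper's proof (given there as Theorem~\ref{momentse}): reduce Haff's law to the comparison estimate $m_{3/2}(t)\le C\,\E(t)^{3/2}$, combine the Povzner inequality for $m_{3/2}$ with the energy evolution, and observe that the smallness condition $\tfrac{3(1-\e^2)}{8}<1-\kappa_{3/2}$ is exactly what makes the quadratic term negative. The only differences are cosmetic: the paper works with the difference $U_{3/2}(t)=m_{3/2}(t)-K\E(t)^{3/2}$ and checks $U_{3/2}'(t_\star)<0$ at a first crossing time, while you work with the ratio $Y=m_{3/2}/\E^{3/2}$ and obtain the equivalent Riccati barrier \eqref{eq:riccati}; also, your form of Povzner keeps the cross term $\mathbf D[g]=\int gg_*|w-w_*|(|w|^3+|w_*|^3)$ and then invokes a Chebyshev argument to extract $M_4$, whereas the paper's Proposition~\ref{povzner} already delivers the coercive term as $-(1-\kappa_{3/2})m_2(t)$ and one simply uses $m_2\ge m_{3/2}^2/\E$ (equivalently $M_4\ge Y^2$) --- your Chebyshev step is therefore unnecessary (indeed, $\int g(w_*)|w-w_*|\,dw_*\ge|w|$ by Jensen and zero momentum gives $\mathbf D[g]\ge 2M_4$ directly). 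Both routes yield the same threshold and the same conclusion.
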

\noindent
For non-constant restitution coefficient, the situation is different and the condition on the restitution coefficient will depend on the initial datum. One can formulate our result as follows (see Theorem \ref{mainnon} for a more precise statement).
\begin{theo}\label{main} Let $f_0$ be a nonnegative velocity distribution satisfying \eqref{initial} and let $f(t,v)$ be the associated solution to the Cauchy problem \eqref{cauch}. For any $\gamma>0$, there exists some explicit $\ell_0:=\ell_{0}(f_0,\gamma) >0$ such that, if the  restitution coefficient $e(\cdot)$
satisfies Assumptions \ref{HYP1} and \eqref{ell} with $\ell_\gamma(e) < \ell_0$, then the generalized Haff's law \eqref{Haff's} hold true.
\end{theo}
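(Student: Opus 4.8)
Only the lower bound in \eqref{Haff's} needs to be proved, the upper bound being Proposition \ref{prop:cool}. Write $M_s(t):=\IR f(t,v)|v|^s\,\d v$ for $s\ge0$, so that $M_2(t)=\E(t)$; set $\E_0:=\E(0)$ and recall that $f(t,\cdot)$ stays a probability density with $\E(t)>0$ for every $t$ (see \cite{MMR}). The plan is to reduce Haff's law to a uniform-in-time bound on the normalized moment $Z(t):=\E(t)^{-(3+\gamma)/2}M_{3+\gamma}(t)$, and then to obtain such a bound from the moment hierarchy, the weakly inelastic hypothesis being exactly what makes it close. For the reduction, \eqref{ell} gives $1-e^2(r)\le2\ell_\gamma(e)\,r^\gamma$, hence by \eqref{Psie} $\mathbf{\Psi}_e(r)\le\frac{\ell_\gamma(e)}{\gamma+4}\,r^{(3+\gamma)/2}$; inserting this into \eqref{temperature} and using $|u|^{3+\gamma}\le2^{2+\gamma}\big(|v|^{3+\gamma}+|\vb|^{3+\gamma}\big)$ yields
\[
-\frac{\d}{\d t}\E(t)\;\le\;\mathsf{C}_\gamma\,\ell_\gamma(e)\,M_{3+\gamma}(t),\qquad \mathsf{C}_\gamma:=\frac{2^{3+\gamma}}{\gamma+4}\,,
\]
so that $\frac{\d}{\d t}\E(t)^{-(1+\gamma)/2}\le\frac{(1+\gamma)\mathsf{C}_\gamma}{2}\,\ell_\gamma(e)\,Z(t)$; a uniform bound $\sup_{t\ge0}Z(t)\le\Lambda$ then gives $\E(t)^{-(1+\gamma)/2}\le\E_0^{-(1+\gamma)/2}+\frac{(1+\gamma)\mathsf{C}_\gamma\ell_\gamma(e)\Lambda}{2}\,t$, which is \eqref{converseE}.

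To bound $Z$, I would test \eqref{weakn} against $\psi(v)=|v|^{3+\gamma}$ and apply the generalized Povzner lemma of \cite{BoGaPa} (in the form of Proposition \ref{povzner}): after averaging in $\n$ the leading loss term comes with the coefficient $1-\kappa_{(3+\gamma)/2}>0$, accompanied only by lower-order mixed-moment terms. Together with the interpolation $M_{4+\gamma}\ge\E^{-1/(1+\gamma)}M_{3+\gamma}^{(2+\gamma)/(1+\gamma)}$ and Young's inequality (to absorb the lower-order contributions into the leading one plus pure powers of $\E$), this produces a bound of the form
\[
\frac{\d}{\d t}M_{3+\gamma}(t)\;\le\;-\mathsf{a}\,\E(t)^{-1/(1+\gamma)}\,M_{3+\gamma}(t)^{1+\frac1{1+\gamma}}+\mathsf{b}\,\E(t)^{(4+\gamma)/2}\big(1+Z(t)\big),
\]
with $\mathsf{a}$ proportional to $1-\kappa_{(3+\gamma)/2}$ and $\mathsf{a},\mathsf{b}$ depending only on $\gamma$. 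Differentiating $Z=M_{3+\gamma}\,\E^{-(3+\gamma)/2}$, inserting this estimate and — for the derivative of the weight $\E^{-(3+\gamma)/2}$ — the above bound on $-\frac{\d}{\d t}\E$, and finally dividing by $\E^{(3+\gamma)/2}$ (using $\E(t)\le\E_0$), one is led to the closed differential inequality
\[
\frac{\d}{\d t}Z(t)\;\le\;\E(t)^{1/2}\Big(-\mathsf{a}\,Z(t)^{1+\frac1{1+\gamma}}+\mathsf{b}\,\big(1+Z(t)\big)+\mathsf{d}\,\ell_\gamma(e)\,\E_0^{\gamma/2}\,Z(t)^{2}\Big),
\]
whose last, perturbative term — with $\mathsf{d}=\mathsf{d}(\gamma)$ — arises precisely from the feedback of the energy dissipation through the weight.

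Now fix $t_0>0$. By the instantaneous appearance of moments for hard-sphere-type kernels (which requires only \eqref{initial}), $M_{3+\gamma}(t_0)<\infty$, hence $Z(t_0)<\infty$. Choose $\Lambda=\Lambda(f_0,\gamma)$ large enough that $\Lambda\ge Z(t_0)$ and $\mathsf{b}(1+\Lambda)\le\frac{\mathsf{a}}{4}\Lambda^{1+\frac1{1+\gamma}}$ (possible since $1+\tfrac1{1+\gamma}>1$), and set $\ell_0(f_0,\gamma):=\mathsf{a}\big(4\,\mathsf{d}\,\E_0^{\gamma/2}\,\Lambda^{\gamma/(1+\gamma)}\big)^{-1}$. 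If $\ell_\gamma(e)<\ell_0$, then $\mathsf{d}\,\ell_\gamma(e)\,\E_0^{\gamma/2}\,\Lambda^{2}\le\frac{\mathsf{a}}{4}\Lambda^{1+\frac1{1+\gamma}}$, so the bracket above is $\le-\frac{\mathsf{a}}{2}\Lambda^{1+\frac1{1+\gamma}}<0$ whenever $Z=\Lambda$; since $Z$ is continuous and $Z(t_0)\le\Lambda$, this forces $Z(t)\le\Lambda$ for all $t\ge t_0$, giving \eqref{converseE} on $[t_0,\infty)$. On $[0,t_0]$ one uses only $\E(t)\ge\E(t_0)>0$, which gives \eqref{converseE} there after shrinking $c$.

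The crux is this last step. The perturbative term $\ell_\gamma(e)\,\E_0^{\gamma/2}Z^2$ carries a strictly higher power of $Z$ than the dissipative term $Z^{1+1/(1+\gamma)}$ (indeed $2>1+\tfrac1{1+\gamma}$ for every $\gamma>0$), so $Z$ is \emph{not} controlled by the moment hierarchy on its own; the admissible threshold $\Lambda$ lies in a window whose upper end blows up as $\ell_\gamma(e)\to0$, which is exactly why $\ell_0$ must depend on the initial datum — through $\E_0$ and through the appearance-of-moments bound on $Z(t_0)$ — and cannot be made universal, in contrast with the scale-invariant constant-coefficient situation of Theorem \ref{mainc}. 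The rest is bookkeeping: tracking the Povzner constants $\mathsf{a},\mathsf{b}$ explicitly and checking that the inelastic corrections to the $(3+\gamma)$-moment identity stay of order at most $M_{4+\gamma}$, so that the clean inequality above is legitimate.
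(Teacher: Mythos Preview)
Your argument is correct and follows the same philosophy as the paper's (Povzner estimate, interpolation, and a barrier/comparison argument on a normalized moment), but the implementation is genuinely different. The paper does \emph{not} work with $Z(t)=M_{3+\gamma}(t)\,\E(t)^{-(3+\gamma)/2}$; instead it runs the barrier at the level of the \emph{third} moment, setting $U_{3/2}(t)=m_{3/2}(t)-K\E(t)^{3/2}$ and using Proposition~\ref{povzner} with $p=3/2$. The feedback from the energy dissipation is then handled by bounding $-\E^{1/2}\frac{\d\E}{\d t}\le \lambda c_\gamma\E^{1/2}m_{(3+\gamma)/2}$, applying Young's inequality, and absorbing $m_{(3+\gamma)/2}^{4/3}$ into $m_{2\gamma}^{1/3}m_2$; the extra moment $m_{2\gamma}$ is controlled, uniformly in $e(\cdot)$, by appearance of moments. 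The outcome is $m_{3/2}(t)\le K\E(t)^{3/2}$, after which the paper invokes the separate bootstrap of Proposition~\ref{propstrat} to propagate this to $m_{(3+\gamma)/2}\le K'\E^{(3+\gamma)/2}$ and conclude via \eqref{Egamma}.

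What you do differently is bypass that two-step scheme by closing the barrier directly on the relevant moment $M_{3+\gamma}$, using the sharper interpolation $M_{4+\gamma}\ge\E^{-1/(1+\gamma)}M_{3+\gamma}^{(2+\gamma)/(1+\gamma)}$ (which mixes in $\E$, unlike plain Jensen). This buys you a cleaner feedback term, already quadratic in $Z$, and eliminates both the need for Proposition~\ref{propstrat} and the auxiliary appeal to $m_{2\gamma}$. The price is that the Povzner remainder $S_{(3+\gamma)/2}$ is more involved than $S_{3/2}$; your claim that interpolation between $\E$ and $M_{3+\gamma}$ reduces it to $C\,\E^{(4+\gamma)/2}(1+Z)$ is correct (each factor lies in $[M_2,M_{3+\gamma}]$ up to a Jensen bound for sub-energy moments, and the product has homogeneity $(4+\gamma)/2$), but this is where the actual work in your route lies and deserves to be written out rather than deferred to ``bookkeeping''. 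One point you should make explicit, as the paper does, is that the appearance-of-moments bound giving $Z(t_0)<\infty$ is uniform in $e(\cdot)$ (because $\kappa_p$ in Proposition~\ref{povzner} is); otherwise the dependence $\ell_0=\ell_0(f_0,\gamma)$ would be circular.
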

\noindent
\noindent
The proof  of Theorem  \ref{main0} is much simpler than the proof of \cite{AloLo1} under the assumption \eqref{f0p} on the initial datum. In particular, it does not requires the introduction of self-similar variables. It is based essentially on the fact that entropy of the solution $f(t,v)$ to \eqref{cauch} grows at most logarithmically, namely, there exists $K_0 >0$ such that
$$\H(f(t)) \leq K_0 \log (1+t) \qquad \forall t \geq 0.$$
Then, using some estimates which allow to relate the energy $\E(t)$ to the entropy, we can deduce from such logarithmic growth that the decreasing of the energy $\E(t)$ is \textit{at most} algebraic, that is, there exists some finite $\lambda >0$ such that $\inf_{t\geq0}(1+t)^\lambda \E(t)>0$. It is known from  \cite{AloLo1} (see also Proposition \ref{lambda}) that for non-constant restitution coefficient, this is enough to conclude the Haff's law \eqref{Haff's}.  The proof of Theorem \ref{main0} is given in Section 3 (see Theorem \ref{entrHaff}) while several inequalities relating energy and entropy are given in the Appendix.\\

\noindent
Concerning Theorems \ref{mainc} and \ref{main}, their proofs are surprisingly   simple and rely only on a careful study of the various moments
of the solution to the Cauchy problem \eqref{cauch}. They will be the object of Section 4.
\section{Some known results}
We briefly recall some known estimates on the moments of the solution to the Cauchy problem. In this section, we will assume that the restitution coefficient $e(\cdot)$ satisfies Assumptions \ref{HYP1} and that the initial datum $f_0$ satisfies \eqref{initial}. We denote then by  $f(t,v)$ the associated solution to the Cauchy problem \eqref{cauch}. For any $t \geq 0$ and any $p \geq 1$ we define
\begin{equation}\label{defmp}m_p(t):=\IR f(t,v)|v|^{2p}\d v\end{equation}
with the convention of notation $\E(t)=m_1(t)$. Then, one has the following proposition, see \cite{AloLo1}.

\begin{propo}\label{povzner} For any real $p \geq 1$, one has
\begin{equation}\label{QepSp}
\dfrac{\d}{\d t}m_p(t)=\IR \Q_{e}(f,f)(t,v)|v|^{2p}\d v\leq-(1-\kappa_{p})m_{p+1/2}(t)+\kappa_{p}\;S_{p}(t),
\end{equation}
where,
\begin{equation*}
S_{p}(t)=\sum^{[\frac{p+1}{2}]}_{k=1}\left(
\begin{array}{c}
p\\k
\end{array}
\right)\left(m_{k+1/2}(t)\;m_{p-k}(t)+m_{k}(t)\;m_{p-k+1/2}(t)\right),
\end{equation*}
$[\frac{p+1}{2}]$ denoting the integer part of $\frac{p+1}{2}$ and
$$\kappa_p=\sup_{\widehat{U} \in \mathbb{S}^2}\int_{\widehat{U}\cdot \sigma \geq 0} \left(\dfrac{3+\widehat{U}\cdot \sigma}{4}\right)^p  + \left(\dfrac{1-\widehat{U}\cdot\sigma}{4}\right)^p\dfrac{\d\sigma}{2\pi}=\int_0^1 \left(\frac{3+t}{4}\right)^p + \left(\frac{1-t}{4}\right)^p \d t >0$$ is an explicit constant such that $\kappa_p < 1$ for any $p > 1.$
\end{propo}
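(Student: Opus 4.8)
The plan is to test the weak form \eqref{weakn} of the collision operator against $\psi(v)=|v|^{2p}$, which gives
\begin{equation*}
\frac{\d}{\d t}m_p(t)=\frac{1}{2\pi}\Itt |u\cdot\n|\,f(v)f(\vb)\Big(|v'|^{2p}+|\vb'|^{2p}-|v|^{2p}-|\vb|^{2p}\Big)\d v\,\d\vb\,\d\n .
\end{equation*}
Everything then reduces to a pointwise (in $(v,\vb)$) control of the angular average of the bracket, i.e. to an inelastic Povzner lemma in the spirit of \cite{BoGaPa}. Since the loss part does not depend on $\n$ and $\frac{1}{2\pi}\IS|u\cdot\n|\,\d\n=|u|$, it contributes exactly $-|u|(|v|^{2p}+|\vb|^{2p})$, so the whole difficulty is to bound the gain average $\frac{1}{2\pi}\IS|u\cdot\n|(|v'|^{2p}+|\vb'|^{2p})\,\d\n$.

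First I would pass to the centre-of-mass and relative variables $V=\tfrac12(v+\vb)$, $u=v-\vb$, for which $|V|^2+\tfrac14|u|^2=\tfrac12(|v|^2+|\vb|^2)$. Writing $\widehat u=u/|u|$ and introducing the elastic scattering direction $\sigma$, the inelastic post-collisional relative velocity reads $u'=\tfrac{|u|}{2}\big[(1+e)\sigma+(1-e)\widehat u\big]$, so that $v'=V+\tfrac12 u'$, $\vb'=V-\tfrac12 u'$, and $|u'|^2-|u|^2=-(1-e^2)(u\cdot\n)^2\le 0$. The \emph{core} of the argument is then the angular estimate
\begin{equation*}
\frac{1}{2\pi}\IS|u\cdot\n|\big(|v'|^{2p}+|\vb'|^{2p}\big)\,\d\n\le \kappa_p\,|u|\,\big(|v|^2+|\vb|^2\big)^p ,
\end{equation*}
obtained after the change of variables $\n\mapsto\sigma$: since $0<e\le1$ the post-collisional energies are controlled by their elastic counterparts, and a direct computation of the averaged $p$-th power shows it is dominated by the two weights $\tfrac{3+\widehat U\cdot\sigma}{4}$ and $\tfrac{1-\widehat U\cdot\sigma}{4}$ (which sum to $1$), the extremum being attained in the elastic limit; this is what makes $\kappa_p$ independent of $e$. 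Identifying the supremum over the orientation $\widehat U\in\S$ with the stated one–dimensional integral is then a matter of rotational symmetry together with the classical fact that the pushforward of $\frac{\d\sigma}{2\pi}$ under $\sigma\mapsto\widehat U\cdot\sigma$ on the half–sphere is $\d t$ on $(0,1)$. I expect this step to be the main obstacle: getting the sharp weights, and hence the $e$–independent constant $\kappa_p$, requires exactly the optimisation that constitutes the heart of the Povzner lemma.

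Next I would expand $(|v|^2+|\vb|^2)^p$. For real $p\ge1$ the generalised binomial inequality of \cite{BoGaPa} yields
\begin{equation*}
\big(|v|^2+|\vb|^2\big)^p\le |v|^{2p}+|\vb|^{2p}+\sum_{k=1}^{[\frac{p+1}{2}]}\binom{p}{k}\big(|v|^{2k}|\vb|^{2(p-k)}+|v|^{2(p-k)}|\vb|^{2k}\big),
\end{equation*}
the upper limit $[\frac{p+1}{2}]$ arising from pairing the indices $k$ and $p-k$. Combining with the loss term $-|u|(|v|^{2p}+|\vb|^{2p})$, the extreme terms assemble into $-(1-\kappa_p)\,|u|(|v|^{2p}+|\vb|^{2p})$ while the middle terms give the positive contributions. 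Integrating against $f(v)f(\vb)$ and using the elementary bounds $|u|\ge\big||v|-|\vb|\big|$ on the (negative) extreme part and $|u|\le|v|+|\vb|$ on the (positive) cross part, I convert the factor $|u|$ into an extra half power of $|v|$ or $|\vb|$: the extreme part produces $-(1-\kappa_p)m_{p+1/2}$ up to lower–order products that are absorbed into $\kappa_p S_p$, and each cross term $|v|^{2k}|\vb|^{2(p-k)}$ produces $m_{k+1/2}m_{p-k}+m_k m_{p-k+1/2}$, which is precisely $S_p(t)$. This gives \eqref{QepSp}.

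Finally, the bound $\kappa_p<1$ for $p>1$ is immediate from the explicit integral: its integrand is $a^p+b^p$ with $a=\tfrac{3+t}{4}$, $b=\tfrac{1-t}{4}$, $a,b\ge0$ and $a+b=1$; since $s^p\le s$ for $s\in[0,1]$ and $p\ge1$, one has $a^p+b^p\le a+b=1$, with strict inequality for every $t\in[0,1)$ when $p>1$ (as then $a\in(0,1)$), whereas $\kappa_1=1$. Integrating over $t\in(0,1)$ yields $0<\kappa_p<1$ for $p>1$, which closes the proof.
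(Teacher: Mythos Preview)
The paper does not give a proof of this proposition; it is simply quoted from \cite{AloLo1} (which in turn adapts the inelastic Povzner estimates of \cite{BoGaPa}), so there is no in-paper argument to compare against. Your outline follows exactly that route, and the overall strategy --- test \eqref{weakn} against $|v|^{2p}$, bound the angular gain average by $\kappa_p\,|u|\,(|v|^2+|\vb|^2)^p$ uniformly in $e\in(0,1]$, expand by the generalised binomial inequality, and convert the factor $|u|$ into half-moments --- is the correct one, as is your verification that $\kappa_p<1$ for $p>1$.

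There is, however, one step that does not close as written. After isolating the negative piece $-(1-\kappa_p)\,|u|\,(|v|^{2p}+|\vb|^{2p})$ you invoke $|u|\ge\big||v|-|\vb|\big|$; this produces, besides the wanted $|v|^{2p+1}+|\vb|^{2p+1}$, a positive remainder $|v|^{2p}|\vb|+|v|\,|\vb|^{2p}$, and after integration these give terms of the type $m_p\,m_{1/2}$ carrying the prefactor $(1-\kappa_p)$, \emph{not} $\kappa_p$. Such products are not part of $S_p$ for general $p$ (the sum in $S_p$ runs over $1\le k\le[\tfrac{p+1}{2}]$ and never reaches the indices $k=0$ or $k=p$ that would generate $m_p m_{1/2}$), so the sentence ``absorbed into $\kappa_p S_p$'' is not justified. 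The clean way to avoid this remainder --- and the one that makes the stated form of \eqref{QepSp} come out exactly --- is to use the zero-momentum normalisation in \eqref{initial}: by Jensen's inequality applied to the convex map $\vb\mapsto|v-\vb|$,
\[
\IR f(t,\vb)\,|v-\vb|\,\d\vb \;\ge\; \Big|\,v-\IR f(t,\vb)\,\vb\,\d\vb\,\Big| \;=\; |v|,
\]
whence $\displaystyle\IRR f\,f_*\,|u|\,(|v|^{2p}+|\vb|^{2p})\,\d v\,\d\vb \ge 2\,m_{p+1/2}(t)$ with \emph{no} lower-order leftover. With this replacement for the loss estimate, and $|u|\le|v|+|\vb|$ on the cross terms as you already do, the bookkeeping closes and yields \eqref{QepSp}.
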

\noindent
A simple consequence of the above is the following, \cite[Corollary 3.6]{AloLo1}:  For any $p \geq 1$, there exists some constant $K_p >0$ such that
$$m_p(0) < \infty \implies m_p(t) \leq K_p(1+t)^{-\frac{2p}{1+\gamma}} \qquad \forall t \geq 0.$$
Furthermore, since we are dealing with hard spheres, the phenomenon of appearance of moments occurs in the same way as in the classical elastic Boltzmann \cite{desvillettes, We99}.  Thus, as soon as $\E(0) < \infty$, the higher moments satisfy $\sup_{t\geq t_0}m_p(t) < \infty$ for any $t_0 >0$.  In particular, one can rephrase \cite[Corollary 3.6]{AloLo1}.
\begin{propo}\label{moments}
Let $f_0$ be a nonnegative velocity distribution satisfying \eqref{initial} and let $f(t,v)$ be the associated solution to the Cauchy problem \eqref{cauch} where the variable restitution coefficient satisfies Assumptions \ref{HYP1} and \eqref{smallez}. For any $t_0 >0$ and any $p \geq 0$,  there exists $K_p >0$ such that
\begin{equation}\label{kp}
m_p(t) \leq K_p \left(1+t\right)^{-\frac{2p}{1+\gamma}} \qquad \forall t \geq t_0.
\end{equation}
\end{propo}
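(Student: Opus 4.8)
The plan is to deduce Proposition \ref{moments} from the moment bound already established in \cite[Corollary 3.6]{AloLo1} (recalled just before Proposition \ref{povzner} above), which propagates a \emph{finite} initial $p$-th moment, $p\ge1$, into the algebraic decay $m_p(t)\le K_p(1+t)^{-2p/(1+\gamma)}$ on $[0,\infty)$. The only feature that the solution may fail to have at $t=0$ is precisely the finiteness of its high moments, and this is exactly what the classical appearance-of-moments mechanism for hard spheres supplies after an arbitrarily short time. I would therefore treat separately the ranges $0\le p\le1$ and $p>1$, and in the second case exploit the fact that \eqref{cauch} is autonomous in order to apply \cite[Corollary 3.6]{AloLo1} with initial time slightly after $0$.

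For $0\le p\le1$ the bound is immediate and in fact valid for every $t\ge0$: since $\IR f(t,v)\d v=1$, Jensen's inequality applied to the concave map $x\mapsto x^p$ gives $m_p(t)\le\E(t)^p$, and Proposition \ref{prop:cool} — extended from $t\ge t_0$ to all $t\ge0$ by using that $\E$ is nonincreasing while $(1+t)^{-2/(1+\gamma)}$ is bounded below on $[0,t_0]$ — yields $\E(t)\le C_0(1+t)^{-2/(1+\gamma)}$. Hence $m_p(t)\le C_0^p(1+t)^{-2p/(1+\gamma)}$, which is \eqref{kp}.

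For $p>1$, fix $t_0>0$ and set $\tau=t_0/2$. Since $\Q_e$ carries a hard-sphere kernel, the appearance of moments proceeds exactly as for the elastic Boltzmann equation \cite{desvillettes,We99}: from $\E(0)<\infty$ one obtains $m_p(\tau)<\infty$ for every $p\ge1$. Because \eqref{cauch} is autonomous and $\Q_e$ conserves mass and momentum, $g(s,v):=f(s+\tau,v)$ solves \eqref{cauch} with initial datum $f(\tau,\cdot)$, which still satisfies \eqref{initial} and has all its moments finite; applying \cite[Corollary 3.6]{AloLo1} to $g$ produces, for each $p\ge1$, a constant $\widetilde K_p>0$ with $m_p(t)\le\widetilde K_p(1+t-\tau)^{-2p/(1+\gamma)}$ for all $t\ge\tau$. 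For $t\ge t_0=2\tau$ one has $t-\tau\ge t/2$, hence $1+t-\tau\ge\tfrac12(1+t)$, and therefore $m_p(t)\le 2^{2p/(1+\gamma)}\widetilde K_p\,(1+t)^{-2p/(1+\gamma)}$, which is \eqref{kp} with $K_p=2^{2p/(1+\gamma)}\widetilde K_p$.

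I do not anticipate a genuine obstacle: all the analytic content is borrowed either from the classical theory or from \cite{AloLo1}, and the only point requiring care is the (clear) verification that \cite[Corollary 3.6]{AloLo1} may be invoked with initial time $\tau$ instead of $0$, the equation being autonomous and the normalisation \eqref{initial} being preserved under the flow. Should one want a self-contained argument, the single technical part would be re-proving \cite[Corollary 3.6]{AloLo1} from the Povzner-type inequality \eqref{QepSp}: one proceeds by induction along $p\in\{1,\tfrac32,2,\tfrac52,\dots\}$, at each step estimating the quadratic remainder $S_p(t)$ by products of strictly lower moments (already controlled), absorbing part of it into the dissipative term $-(1-\kappa_p)m_{p+1/2}(t)$ — recall $\kappa_p<1$ for $p>1$ — and comparing the resulting differential inequality for $m_p$ with the energy ODE of Proposition \ref{prop:cool}; since this bootstrap is carried out in the \textit{op. cit.}, I would not reproduce it.
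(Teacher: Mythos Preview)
Your proposal is correct and follows exactly the route sketched in the paper: the paper does not give a formal proof of this proposition but simply notes that appearance of moments for hard spheres (citing \cite{desvillettes,We99}) yields $m_p(t_0)<\infty$ for any $t_0>0$, so that one may ``rephrase \cite[Corollary 3.6]{AloLo1}'' starting from a positive time. Your write-up spells out the two small points the paper leaves implicit --- the time-shift via autonomy of \eqref{cauch} and the separate treatment of $0\le p\le1$ by Jensen --- but the underlying argument is the same.
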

\noindent
Observe that in order to prove that the second part of Haff's law \eqref{converseE}, it is enough to control $m_{\frac{3+\gamma}{2}}(t)$ in terms of $\E(t)^{\frac{3+\gamma}{2}}$. Indeed,  recall that
$$-\dfrac{\d}{\d t}\E(t)=\IRR f(t,v)f(t,\vb)\mathbf{\Psi}_e(|u|^2)\d v \d \vb \qquad \forall t \geq 0.$$
Since $\ell_\gamma(e) < \infty$, one has
$$1-e(r) \leq \ell_\gamma(e)r^\gamma \qquad \forall r >0.$$
Plugging this estimate in the definition of $\mathbf{\Psi}_e(r^2)$ and using the fact that $1-e^2(r) \leq 2(1-e(r))$ for any $r >0$, we get that
$$\mathbf{\Psi}_e(|u|^2) \leq \ell_\gamma(e)|u|^{3+\gamma}\int_0^1 z^{3+\gamma}\d z=\dfrac{\ell_\gamma(e)}{4+\gamma} |u|^{3+\gamma} \qquad \forall u\in\mathbb{R}^{3}.$$
Since $|u|^{3+\gamma} \leq 2^{2+\gamma}\left(|v|^{3+\gamma}+|\vb|^{3+\gamma}\right)$, one gets
\begin{equation}\begin{split}\label{Egamma}
- \dfrac{\d}{\d t}\E(t)  &\leq \dfrac{2^{2+\gamma}\ell_\gamma(e)}{4+\gamma} \IRR f(t,v)f(t,\vb)\left(|v|^{3+\gamma}+|\vb|^{3+\gamma}\right)\d v\d\vb\\
&=\dfrac{2^{3+\gamma}\ell_\gamma(e)}{4+\gamma} m_{\frac{3+\gamma}{2}}(t).\end{split}
\end{equation}
Therefore, if there exists some constant $K>0$ such that
\begin{equation}\label{controlg} m_{\frac{3+\gamma}{2}}(t) \leq K \E(t)^{\frac{3+\gamma}{2}}(t) \qquad \forall t \geq t_0>0,\end{equation}
setting $C_\gamma=\frac{2^{3+\gamma}K}{4+\gamma}\ell_\gamma(e)$, we obtain from \eqref{Egamma} that
$$- \dfrac{\d}{\d t}\E(t)  \leq C_\gamma\,\E(t)^{\frac{3+\gamma}{2}} \qquad \forall t \geq t_0.$$
A simple integration of this inequality yields implies,
$$
\E(t)\geq \frac{\E(t_0)}{\left(1+\frac{1+\gamma}{2}\E(t_0)^{\frac{1+\gamma}{2}}C_{\gamma}\;(t-t_0)\right)^{\frac{2}{1+\gamma}}}\quad\forall t\geq t_0.
$$
which implies \eqref{converseE}.  An additional simplification in the arguments comes with the following proposition which has already been used implicitly in \cite{AloLo1}. We give a complete proof of it for the sake of clarity.
\begin{propo}\label{propstrat}  Assume that the restitution coefficient $e(\cdot)$ satisfy  Assumptions \ref{HYP1} and is such that $\ell_\gamma(e) < \infty$ for some $\gamma \geq 0$. Assume that there exists some constant $C>0$ such that
\begin{equation}\label{controlm}
m_{\frac{3}{2}}(t) \leq C \E(t)^{\frac{3}{2}}(t) \qquad \forall t \geq t_0>0.
\end{equation}
Then, for any $p \geq 3/2$, there exists a constant $K_p >0$ such that
\begin{equation}\label{kp} m_p(t) \leq K_p \E(t)^p \qquad \forall t \geq t_0.
\end{equation}
In particular, the generalized Haff's law \eqref{Haff's} holds.
\end{propo}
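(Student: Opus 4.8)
\emph{Strategy and Step 1 (a differential inequality for the renormalised moments).} The plan is to upgrade the single estimate \eqref{controlm} to the full scale $m_p(t)\le K_p\,\E(t)^p$, $p\ge 3/2$, by a bootstrap built on the Povzner-type inequality of Proposition \ref{povzner}, and then to feed the instance $p=\frac{3+\gamma}{2}$ into the chain of inequalities preceding \eqref{Egamma}. For $p\ge 3/2$ set $\mu_p(t)=m_p(t)\,\E(t)^{-p}$ on $[t_0,\infty)$; since the solution never cools completely in finite time one has $\E(t)>0$, and by the instantaneous appearance of moments for hard spheres $\mu_p(t_0)<\infty$. Two elementary applications of Jensen's inequality to the probability measure $f(t,v)\,\d v$ enter: first, $m_{p+1/2}\ge m_p^{\,1+\frac1{2p}}=\mu_p^{\,1+\frac1{2p}}\,\E^{\,p+1/2}$; second, dropping $e^2\ge 0$ in \eqref{Psie} gives $\mathbf{\Psi}_e(|u|^2)\le |u|^3/8$, so that together with $|u|^3\le 4(|v|^3+|\vb|^3)$ and \eqref{controlm},
\[
-\frac{\d}{\d t}\E(t)\le m_{3/2}(t)\le C\,\E(t)^{3/2},\qquad t\ge t_0 .
\]
Differentiating $\mu_p$ and inserting \eqref{QepSp}, the lower bound on $m_{p+1/2}$, the previous line, and the induction hypothesis $S_p(t)\le A_p\,\E(t)^{p+1/2}$, we obtain
\[
\frac{\d}{\d t}\mu_p(t)\le \E(t)^{1/2}\Big(-(1-\kappa_p)\,\mu_p(t)^{1+\frac1{2p}}+pC\,\mu_p(t)+\kappa_p A_p\Big),\qquad t\ge t_0 .
\]
Since $1-\kappa_p>0$ for $p>1$ while $1+\frac1{2p}>1$, the bracket becomes negative once $\mu_p$ is large, and a standard comparison argument gives $\mu_p(t)\le K_p$ on $[t_0,\infty)$, with $K_p$ the larger of $\mu_p(t_0)$ and the largest zero of the bracket; this is exactly $m_p(t)\le K_p\,\E(t)^p$.

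\emph{Step 2: running the induction.} One applies Step 1 along $p_n=\frac{3+n}{2}$, $n=0,1,2,\dots$, the case $n=0$ being \eqref{controlm} itself. To justify the hypothesis $S_{p_n}\le A_{p_n}\E^{p_n+1/2}$ it suffices to observe that every moment order occurring in $S_{p_n}$ (see Proposition \ref{povzner}) is a multiple of $\tfrac12$ lying in $[\tfrac12,p_n)$: those $\le 1$ are controlled by $m_q\le\E^q$ (Jensen), the order $\tfrac32$ by \eqref{controlm}, and those $\ge 2$ equal $p_m$ for some $m<n$ and are controlled by the previous steps; multiplying the corresponding bounds and using $m_0=1$ produces $A_{p_n}$. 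For an arbitrary real $p\in[p_n,p_{n+1}]$ the log-convexity of $q\mapsto m_q$ gives $m_p\le m_{p_n}^{1-\theta}m_{p_{n+1}}^{\theta}$ with $p=(1-\theta)p_n+\theta p_{n+1}$, whence $m_p(t)\le K_{p_n}^{1-\theta}K_{p_{n+1}}^{\theta}\,\E(t)^p$, which is \eqref{kp}.

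\emph{Step 3: the generalised Haff's law.} Since $\frac{3+\gamma}{2}\ge 3/2$, \eqref{kp} yields \eqref{controlg}; plugging this into \eqref{Egamma} gives $-\frac{\d}{\d t}\E(t)\le C_\gamma\,\E(t)^{(3+\gamma)/2}$ for $t\ge t_0$, and integrating this scalar differential inequality produces the lower bound \eqref{converseE}, first on $[t_0,\infty)$ and then on $[0,\infty)$ after adjusting the constant. Combined with the upper bound of Proposition \ref{prop:cool}, which applies because \eqref{ell} is in force, this is precisely \eqref{Haff's}. The delicate point in the whole argument is the differential inequality of Step 1: because the gas is cooling, $\mu_p$ inherits from its denominator the destabilising contribution $p\,\mu_p\,(-\tfrac{\d}{\d t}\E)/\E\sim\E^{1/2}\mu_p$, only linear in $\mu_p$, and it is the super-linear gain $m_{p+1/2}\ge m_p^{1+\frac1{2p}}$, carried through Proposition \ref{povzner}, that dominates it and keeps $\mu_p$ bounded; the bookkeeping of orders in $S_p$, the interpolation, and the final quadrature are routine.
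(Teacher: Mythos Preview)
Your proof is correct and follows essentially the same route as the paper: both arguments combine the Povzner inequality (Proposition \ref{povzner}), Jensen's bound $m_{p+1/2}\ge m_p^{1+1/2p}$, the crude estimate $-\tfrac{\d}{\d t}\E\le m_{3/2}$ (together with the hypothesis \eqref{controlm}), and an induction over half-integer $p$ followed by interpolation. The only cosmetic difference is that the paper works with the difference $U_p(t)=m_p(t)-K\E(t)^p$ and argues at crossing times, whereas you work with the ratio $\mu_p(t)=m_p(t)\E(t)^{-p}$ and derive a closed differential inequality; these are equivalent packagings of the same barrier argument.
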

\begin{proof} Let $t_0 > 0$ be fixed.  First observe that using classical interpolation, it suffices to prove the result for any $p \geq 3/2$ such that $2p\in \mathbb{N}$.  Argue by induction assuming that for any integer $j$ such that $2j\in\mathbb{N}$, and $1 \leq j \leq p-1/2$ there exists $K_j >0$  such that $m_j(t) \leq K_j \E(t)^j$ for $t\geq t_0$.\\

\noindent
Recall that, according to Proposition \ref{povzner}
$$\frac{\d }{\d t}m_p(t) \leq-(1-\kappa_{p})m_{p+1/2}(t)+\kappa_{p}\;S_{p}(t),$$
where
\begin{equation*}
S_{p}(t)=\sum^{[\frac{p+1}{2}]}_{k=1}\left(
\begin{array}{c}
p\\k
\end{array}
\right)\left(m_{k+1/2}(t)\;m_{p-k}(t)+m_{k}(t)\;m_{p-k+1/2}(t)\right).
\end{equation*}
The crucial point is that, for $p \geq 2$, the above expression $S_p(t)$ involves moments of order less than $p-1/2$ except for $p=3/2$ which explains its peculiar role.  The induction hypothesis implies therefore that there exists a constant $C_p>0$ such that
$$
S_p(t) \leq C_p\, \E(t)^{p+1/2} \qquad \forall t \geq t_0,
$$
where $C_p$ can be taken as $$
C_p=\sum^{[\frac{p+1}{2}]}_{k=1}\left(
\begin{array}{c}
p\\k
\end{array}
\right)\left(K_{k+1/2}\;K_{p-k}+K_{k}\;K_{p-k+1/2}\right).
$$
Furthermore, according to Jensen's inequality $m_{p+1/2}(t)\geq m_{p}^{1+1/2p}(t)$, therefore we obtain
\begin{equation}\label{difmp}
\dfrac{\d }{\d t}m_p(t) \leq -(1-\kappa_{p})m_{p}^{1+1/2p}(t)+ \kappa_p\;C_p \,\E(t)^{p+1/2} \qquad \forall t \geq t_0.
\end{equation}
Additionally, according to \eqref{Phie} and since $e(r) \leq 1$ for any $r \geq 0$, one has clearly $\mathbf{\Psi}_e(|u|^2) \leq \frac{|u|^3}{8}$ for any $u \in \R^3.$ Thus, using \eqref{temperature},
\begin{align}\label{dEmm32}
-\dfrac{\d}{\d t}\E(t) &\leq \dfrac{1}{8} \IRR |u|^3 f(t,v)f(t,\vb)\d v\d\vb \nonumber\\
&\leq \IR f(t,v)|v|^3\d v=m_{3/2}(t) \qquad \forall t \geq t_0.
\end{align}
Let $K>$ be conveniently chosen later and define $U_{p}(t):=m_{p}(t)- K\E(t)^{p}$.  Then, combining \eqref{difmp} and \eqref{dEmm32},
\begin{equation*}
\begin{split}
\dfrac{\d}{\d t}U_p(t)&=\dfrac{\d}{\d t}m_p(t)-pK\E(t)^{p-1}\dfrac{\d}{\d t}\E(t)\\
&\leq -(1-\kappa_{p})m_{p}^{1+1/2p}(t)+ \kappa_p\;C_p \,\E(t)^{p+1/2} +pKm_{3/2}(t)\E(t)^{p-1} \qquad \forall t \geq t_0.
\end{split}
\end{equation*}
Therefore, using \eqref{controlm} we obtain
$$\dfrac{\d}{\d t}U_p(t) \leq -(1-\kappa_{p})m_{p}^{1+1/2p}(t)+ \kappa_p\;C_p \,\E(t)^{p+1/2} +pK\E(t)^{p+1/2} \qquad \forall t\geq t_0.$$
This is enough to prove \eqref{kp} for $K=K_p$ large enough. Indeed, pick  $K$ so that $m_{p}(t_0)<K \E(t_0)^{p}$. Then, by time-continuity in the moments, the estimate \eqref{kp} follows at least for some finite subsequent time. Assume that there exists a time $t_{\star} > t_0$ such that $m_{p}(t_{\star})=K \E(t_{\star})^{p}$, then the above inequality  implies
$$
\dfrac{\d U_{p}}{\d t}(t_\star)\leq \left(-(1-\kappa_{p})K^{1+1/2p}+\kappa_p\,C_p + pK\right)\E(t_\star)^{p+1/2} <0
$$
whenever $K$ is large enough. This proves that \eqref{kp} holds for any $p \geq 3/2$.
\end{proof}
 \noindent
For \textit{non-constant restitution coefficient}, an interesting result holds: in order to prove the lower bound \eqref{converseE}, it is enough to prove that the cooling of $\E(t)$ is \textit{at most algebraic} with arbitrary rate.  More precisely, we have  \cite[Theorem 3.7]{AloLo1} the following proposition which follows from Proposition \ref{propstrat}.
\begin{propo}\label{lambda}
Assume that the restitution coefficient $e(\cdot)$ satisfy  Assumptions \ref{HYP1} and \eqref{smallez} with $\gamma >0$. If there exist $C_0 >0$ and $\lambda >0$ such that
\begin{equation*}\label{lam}
\E(t) \geq  C_0\,(1+t)^{-\lambda} \qquad \forall t >0,
\end{equation*}
then there exists $C >0$ such that $\E(t) \geq  C\,(1+t)^{-\tfrac{2}{1+\gamma}}$ for any $t \geq 0$, i.e. the generalized Haff's law \eqref{Haff's} holds true.
\end{propo}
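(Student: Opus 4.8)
The plan is to derive Proposition~\ref{lambda} from Proposition~\ref{propstrat}: it suffices to produce $t_0>0$ and $C>0$ with
\[
m_{3/2}(t)\le C\,\E(t)^{3/2}\qquad\forall\,t\ge t_0,
\]
since then Proposition~\ref{propstrat} gives the generalized Haff's law \eqref{Haff's} on $[t_0,\infty)$, and on $[0,t_0]$ it follows from the continuity and strict positivity of $\E$. Two preliminary remarks organise the proof. First, Proposition~\ref{moments} with $p=1$ gives $\E(t)\le K_1(1+t)^{-2/(1+\gamma)}$, so the hypothesis $\E(t)\ge C_0(1+t)^{-\lambda}$ forces $\lambda\ge 2/(1+\gamma)$, and when $\lambda=2/(1+\gamma)$ there is nothing to prove; hence one may assume $\lambda>2/(1+\gamma)$. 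Second, the hypothesis is equivalent to $(1+t)^{-1}\le C_0^{-1/\lambda}\E(t)^{1/\lambda}$, so, plugging this into Proposition~\ref{moments},
\[
m_{\frac{3+\gamma}{2}}(t)\le A\,\E(t)^{\rho},\qquad \rho:=\frac{3+\gamma}{\lambda(1+\gamma)},\qquad t\ge t_0,
\]
for some $A>0$; this is the only place the ``at most algebraic cooling'' assumption enters.

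The heart of the matter is a barrier (continuity) argument for the reduced estimate. Fix $t_0>0$ and a constant $K$ with $m_{3/2}(t_0)<K\,\E(t_0)^{3/2}$, and set $U(t):=m_{3/2}(t)-K\,\E(t)^{3/2}$, so $U(t_0)<0$. If $U$ were not negative on $[t_0,\infty)$, let $t_\star>t_0$ be the first zero, so $m_{3/2}(t_\star)=K\,\E(t_\star)^{3/2}$ and $U'(t_\star)\ge 0$. On one hand, Proposition~\ref{povzner} with $p=3/2$ gives $\frac{\d}{\d t}m_{3/2}\le -(1-\kappa_{3/2})m_2+\kappa_{3/2}S_{3/2}$ with $S_{3/2}=\tfrac32\big(m_{3/2}m_{1/2}+\E^2\big)$; using $m_2\ge m_{3/2}^{4/3}$ (Jensen) and $m_{1/2}\le\E^{1/2}$ (Cauchy--Schwarz), and then $m_{3/2}(t_\star)=K\E(t_\star)^{3/2}$,
\[
\frac{\d}{\d t}m_{3/2}(t_\star)\le\Big[-(1-\kappa_{3/2})K^{4/3}+\tfrac32\kappa_{3/2}(K+1)\Big]\E(t_\star)^2 .
\]
On the other hand, \eqref{temperature} and \eqref{Egamma} give $\E'(t)\ge -C_\gamma\,m_{\frac{3+\gamma}{2}}(t)$ with $C_\gamma=\tfrac{2^{3+\gamma}}{4+\gamma}\ell_\gamma(e)$, whence, by the power bound above,
\[
-K\,\frac{\d}{\d t}\E^{3/2}(t_\star)=\tfrac{3K}{2}\,\E(t_\star)^{1/2}\big(-\E'(t_\star)\big)\le\tfrac32 C_\gamma A\,K\,\E(t_\star)^{1/2+\rho}.
\]
Adding, and writing $\E(t_\star)^{1/2+\rho}=\E(t_\star)^2\,\E(t_\star)^{\rho-3/2}\le\E(t_0)^{\rho-3/2}\,\E(t_\star)^2$ (valid because $\E$ is nonincreasing and $\rho\ge 3/2$), we get
\[
U'(t_\star)\le\Big[-(1-\kappa_{3/2})K^{4/3}+\tfrac32\kappa_{3/2}(K+1)+\tfrac32 C_\gamma A\,\E(t_0)^{\rho-3/2}K\Big]\E(t_\star)^2<0
\]
for $K$ large enough (the $K^{4/3}$ term dominates), contradicting $U'(t_\star)\ge 0$. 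Hence $m_{3/2}(t)<K\,\E(t)^{3/2}$ on $[t_0,\infty)$, and Proposition~\ref{propstrat} concludes.

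The above settles the case $\rho\ge 3/2$, i.e.\ $\lambda\le\frac{2(3+\gamma)}{3(1+\gamma)}$, which is a threshold strictly above $2/(1+\gamma)$. For a larger a priori rate $\lambda$ the strategy is to first sharpen the algebraic lower bound on $\E$ and then fall back on the previous step. Running the same barrier with $K\E^{a}$ in place of $K\E^{3/2}$, the balancing of exponents still closes for every $a$ with $1<a\le\min\{3/2,\,3(\rho-1)\}$ (which is $>1$ as soon as $\rho>4/3$), giving $m_{3/2}(t)\le C\,\E(t)^{a}$; combining this with $-\E'(t)\le m_{3/2}(t)$ (cf.~\eqref{dEmm32}) and integrating yields a strictly sharper lower bound $\E(t)\ge c\,(1+t)^{-1/(a-1)}$, i.e.\ a smaller admissible exponent; iterating, and upgrading the higher moments at each stage via Proposition~\ref{propstrat} with the improved base estimate, one aims to reach the regime $\rho\ge 3/2$ after finitely many passes.

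The main obstacle I foresee is precisely making this iteration terminate for arbitrarily large $\lambda$: the exponents do not obviously close in one pass, and one must control at large times the feedback between $m_{3/2}$ and the energy dissipation $-\E'\sim m_{\frac{3+\gamma}{2}}$ — this is exactly where \cite{AloLo1} resorts to the self-similar rescaling of \eqref{cauch}. If that point can be handled with the moment estimates alone, the argument above is completely elementary; otherwise one simply imports the rescaling step of the \textit{op.\ cit.}
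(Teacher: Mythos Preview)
Your barrier argument in the regime $\rho\ge 3/2$ is correct and follows the comparison strategy of Section~4. The gap is precisely where you flag it: the bootstrap for large $\lambda$ does not close. If one runs the barrier with $K\E^{a}$ and then feeds $m_{3/2}\le K\E^{a}$ into $-\E'\le m_{3/2}$, the new rate is $\lambda'=1/(a-1)$. Even taking the optimal $a=\rho$ (which requires replacing your Jensen bound $m_2\ge m_{3/2}^{4/3}$ by the sharper Cauchy--Schwarz bound $m_2\ge m_{3/2}^{2}/\E$ of \eqref{m3/2E}), the map $\lambda\mapsto\lambda'$ has its fixed point exactly at $2/(1+\gamma)$, and one checks $\lambda'>\lambda$ whenever $\lambda>2/(1+\gamma)$. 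Since that is precisely the regime you must treat, the iteration never improves the lower bound; when $\rho\le 1$ (i.e.\ $\lambda\ge(3+\gamma)/(1+\gamma)$) it cannot even start. With your weaker Jensen inequality the fixed point moves up to $(4+\gamma)/(2(1+\gamma))$, but the same obstruction persists.

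The missing device---and this is why the present paper does not spell out a proof of Proposition~\ref{lambda} but refers to \cite[Theorem~3.7]{AloLo1}---is to interpolate $m_{(3+\gamma)/2}$ \emph{through} $m_{3/2}$ rather than bounding it independently of the barrier height $K$. Writing $m_{(3+\gamma)/2}\le m_{3/2}^{1-\theta}\,m_{p}^{\theta}$ with $\theta=\gamma/(2p-3)\in(0,1)$ and choosing $p\ge\tfrac34\lambda(1+\gamma)$, one obtains at the crossing time $m_{3/2}=K\E^{3/2}$, via Proposition~\ref{moments} and the hypothesis $(1+t)^{-1}\le C_0^{-1/\lambda}\E^{1/\lambda}$,
\[
\tfrac{3K}{2}\,C_\gamma\,\E^{1/2}m_{(3+\gamma)/2}\;\le\; C\,K^{2-\theta}\,\E^{2}.
\]
With the Cauchy--Schwarz bound $m_2\ge m_{3/2}^{2}/\E$ the dissipative term is $-\alpha K^{2}\E^{2}$ (here your $K^{4/3}$ would be too weak), and the bracket reads $-\alpha K^{2}+O(K^{2-\theta})<0$ for large $K$, precisely because $\theta>0$, i.e.\ because $\gamma>0$. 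This closes \eqref{controlm} for every $\lambda$ in a single pass, and Proposition~\ref{propstrat} finishes the proof.
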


\section{Entropy-based proof of Haff's law}
The aim of this section is to prove Theorem \ref{main0}.  We begin computing the entropy production associated to the Boltzmann equation for granular gases \eqref{cauch}.
\subsection{Entropy production functional}
For any nonnegative $f$, one can use the weak form \eqref{weakn} with the test function $\psi(v)=\log f(v)$ to compute the production of entropy
$$\mathcal{S}_e(f):=\IR \Q_e(f,f)\log f \d v.$$
More precisely,
\begin{equation*}
\begin{split}
\mathcal{S}_{e}(f)& =\dfrac{1}{2\pi} \int_{\R^6 \times \S} |u \cdot \n|f(v)f(\vb)\log \left(\frac{f(v')f(\vb')}{f(v)f(\vb)}\right)\d v \d\vb \d\n\\
&=\dfrac{1}{2\pi} \int_{\R^6 \times \S} |u \cdot \n|f(v)f(\vb)\left(\log \left(\frac{f(v')f(\vb')}{f(v)f(\vb)}\right)-\frac{f(v')f(\vb')}{f(v)f(\vb)}+1\right)\d v \d\vb \d\n\\
&\phantom{++++}+\dfrac{1}{2\pi} \int_{\R^6 \times \S} |u \cdot \n|\left(f(v')f(\vb')-{f(v)f(\vb)}\right)\d v\d\vb\d\n.
\end{split}
\end{equation*}
Define,
\begin{equation}\label{Def}
\mathcal{D}_{e}(f):=-\dfrac{1}{2\pi} \int_{\R^6 \times \S} |u \cdot \n|f(v)f(\vb)\left(\log \left(\frac{f(v')f(\vb')}{f(v)f(\vb)}\right)-\frac{f(v')f(\vb')}{f(v)f(\vb)}+1\right)\d v \d\vb \d\n
\end{equation}
which is a non-negative quantity since $\log x \leq x-1$ for any $x >0.$ Notice that, if $e=1$, then $\mathcal{D}_{e}$ is the classical entropy production functional and $\mathcal{S}_{e}(f)=-\mathcal{D}_e(f) \leq 0$, which means that the entropy production $\mathcal{S}_e$ is non-positive.\\

\noindent
For inelastic collisions, the entropy production functional is more intricate,
$$\mathcal{S}_{e}(f)=-\mathcal{D}_e(f)+ \dfrac{1}{2\pi} \int_{\R^6 \times \S} |u \cdot \n|\left(f(v')f(\vb')-{f(v)f(\vb)}\right)\d v\d\vb\d\n$$
which means that the entropy production splits into a dissipative part $(-\mathcal{D}_e)$ and a non-negative part. Let us compute more precisely this last term. Since $\vartheta_e(\cdot)$ is strictly increasing, it is bijective.  Moreover, $|u'\cdot \n|=\vartheta_e(|u\cdot \n|))$, thus, one can write $|u\cdot\n|=\vartheta_e^{-1}(|u'\cdot\n|)$. Then, using the change of variables $(v',\vb') \to (v,\vb)$ we obtain
\begin{equation*}\begin{split}
\int_{\R^6 \times \S} |u \cdot \n| f(v')f(\vb')\d v\d\vb\d\n&=\int_{\R^6 \times \S}\vartheta_e^{-1}(|u'\cdot\n|) f(v')f(\vb')\d v\d\vb\d\n\\
&=\int_{\R^6 \times \S}\vartheta_e^{-1}(|u \cdot\n|) f(v )f(\vb )\dfrac{\d v\d\vb\d\n}{J_e(\vartheta_e^{-1}(|u\cdot \n|))}.
\end{split}\end{equation*}
We used that $$\d v'\d\vb'=J_e(|u\cdot\n|)\d v\d\vb=J_e(\vartheta_e^{-1}(|u'\cdot\n|)\d v\d\vb.$$
It is easy to see that $\vartheta_e^{-1}(|u\cdot\n|)=\frac{|u\cdot\n|}{e(\vartheta_e^{-1}(|u\cdot\n|)}$.  Then, we deduce that
\begin{multline*}
\dfrac{1}{2\pi}\int_{\R^6 \times \S} |u \cdot \n|\left(f(v')f(\vb')-{f(v)f(\vb)}\right)\d v\d\vb\d\n\\
=\dfrac{1}{2\pi}\int_{\R^6 \times \S} |u \cdot \n|{f(v)f(\vb)}\left(\dfrac{1}{e (\vartheta_e^{-1}(|u\cdot\n|)) J_e(\vartheta_e^{-1}(|u\cdot\n|))}-1\right)\d v\d\vb\d\n\\
=\IRR |u|f(v)f(\vb)\mathbf{\Phi}_e(|u|)\d v\d\vb.
\end{multline*}
For any fixed $v,\vb$, we have defined
$$\mathbf{\Phi}_e(|u|):=\dfrac{1}{2\pi}\int_{\S} |\widehat{u} \cdot \n|\left(\dfrac{1}{e (\vartheta_e^{-1}(|u\cdot\n|)) J_e(\vartheta_e^{-1}(|u\cdot\n|))}-1\right) \d\n.$$
After some minor computations,
$$\mathbf{\Phi}_e(|u|)=\frac{2}{|u|^2}\int_0^{|u|} \left(\dfrac{1}{e (\vartheta_e^{-1}(z))\, J_e(\vartheta_e^{-1}(z))}-1\right)z\d z.$$
Setting then $r=\vartheta_e^{-1}(z)$ and recalling that $J_e(y)=\vartheta'_e(y)$, we easily get that
\begin{equation*}
\mathbf{\Phi}_e(|u|)=\dfrac{2}{|u|^2}\int_0^{\vartheta_e^{-1}(|u|)} \left(r-\vartheta_e(r)\,\vartheta_e'(r)\right)\d r
\end{equation*}
where we also used that $\vartheta_e^{-1}(0)=0.$  We just proved the following proposition.
\begin{propo} Assume that the restitution coefficient $e(\cdot)$ satisfies Assumption \ref{HYP1}, items (1) and (2). Then, for any non-negative distribution function $f(v)$
\begin{equation}\label{dissip}
\mathcal{S}_{e}(f)=\IR \Q_e(f,f)\log f \d v=-\mathcal{D}_e(f) + \IRR |u|f(v)f(\vb)\mathbf{\Phi}_e(|u|)\d v\d\vb
\end{equation}
where $\mathcal{D}_e(f) \geq 0$ is given by \eqref{Def} while $\mathbf{\Phi}_e(\cdot)$ is defined by
\begin{equation}\label{Phie}
\mathbf{\Phi}_e(\varrho)=\dfrac{2}{\varrho^2}\int_0^{\vartheta_e^{-1}(\varrho)} \left(r-\vartheta_e(r)\,\vartheta_e'(r)\right)\d r, \qquad \forall \varrho >0.
\end{equation}
\end{propo}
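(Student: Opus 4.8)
The plan is to establish \eqref{dissip} by a bookkeeping argument that separates the inelastic entropy production into an ``elastic-type'' dissipative piece plus a correction term coming from the fact that the inelastic collision map is not measure preserving.

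First I would insert $\psi(v)=\log f(v)$ into the weak form \eqref{weakn}, so that $\mathcal{S}_e(f)=\frac{1}{2\pi}\int_{\R^6\times\S}|u\cdot\n|f(v)f(\vb)\log\bigl(\tfrac{f(v')f(\vb')}{f(v)f(\vb)}\bigr)\d v\d\vb\d\n$, and then use the elementary splitting $\log x=(\log x-x+1)+(x-1)$ with $x=f(v')f(\vb')/(f(v)f(\vb))$. Integrated against $\frac{1}{2\pi}|u\cdot\n|f(v)f(\vb)$, the $(\log x-x+1)$ term is by definition $-\mathcal{D}_e(f)$, and $\mathcal{D}_e(f)\geq0$ because $\log x\leq x-1$ — the only place where positivity is used, and it is immediate. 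The remaining $(x-1)$ term simplifies to the linear ``gain minus loss'' contribution $\frac{1}{2\pi}\int_{\R^6\times\S}|u\cdot\n|\bigl(f(v')f(\vb')-f(v)f(\vb)\bigr)\d v\d\vb\d\n$.

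The heart of the argument is to evaluate the gain part. Under Assumptions \ref{HYP1}(1)--(2) the map $r\mapsto\vartheta_e(r)=r\,e(r)$ is an absolutely continuous, strictly increasing bijection of $\R_+$ with $\vartheta_e^{-1}(0)=0$; moreover the collision map \eqref{transfpre} has Jacobian $J_e(|u\cdot\n|)=\vartheta_e'(|u\cdot\n|)$ and $|u'\cdot\n|=\vartheta_e(|u\cdot\n|)$ by \eqref{coef}. For fixed $\n$, the change of variables $(v',\vb')\mapsto(v,\vb)$ — equivalently, rewriting the integrand in primed quantities via $|u\cdot\n|=\vartheta_e^{-1}(|u'\cdot\n|)$ together with $\d v'\d\vb'=J_e(|u\cdot\n|)\d v\d\vb$ — turns the gain term into $\int_{\R^6\times\S}\vartheta_e^{-1}(|u\cdot\n|)\,f(v)f(\vb)\,\frac{\d v\d\vb\d\n}{J_e(\vartheta_e^{-1}(|u\cdot\n|))}$. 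Subtracting the loss term and using $\vartheta_e^{-1}(z)/z=1/e(\vartheta_e^{-1}(z))$ (immediate from $z=r\,e(r)$ with $r=\vartheta_e^{-1}(z)$) collapses the two contributions into
\[
\frac{1}{2\pi}\int_{\R^6\times\S}|u\cdot\n|\,f(v)f(\vb)\left(\frac{1}{e(\vartheta_e^{-1}(|u\cdot\n|))\,J_e(\vartheta_e^{-1}(|u\cdot\n|))}-1\right)\d v\d\vb\d\n .
\]

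Finally I would carry out the angular integration and one substitution. For fixed $v,\vb$ the $\n$-integrand depends only on $|u\cdot\n|=|u|\,|\widehat u\cdot\n|$, so the spherical average identity $\frac{1}{2\pi}\int_{\S}h(\widehat u\cdot\n)\d\n=\int_{-1}^{1}h(s)\d s$, used with $h(s)=|s|\,g(|u|\,|s|)$ and then $z=|u|\,s$, produces the form $\frac{2}{|u|^2}\int_0^{|u|}g(z)\,z\d z$; pulling out the factor $|u|$ shows the gain-minus-loss term equals $\IRR|u|f(v)f(\vb)\mathbf{\Phi}_e(|u|)\d v\d\vb$ with $\mathbf{\Phi}_e(\varrho)=\frac{2}{\varrho^2}\int_0^{\varrho}\bigl(\tfrac{1}{e(\vartheta_e^{-1}(z))J_e(\vartheta_e^{-1}(z))}-1\bigr)z\d z$. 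The substitution $r=\vartheta_e^{-1}(z)$, $\d z=\vartheta_e'(r)\d r$, together with $J_e=\vartheta_e'$ and $\vartheta_e(r)/e(r)=r$, then turns this into $\frac{2}{\varrho^2}\int_0^{\vartheta_e^{-1}(\varrho)}\bigl(r-\vartheta_e(r)\vartheta_e'(r)\bigr)\d r$, which is \eqref{Phie}. I expect the one real subtlety to be justifying the change of variables $(v,\vb)\mapsto(v',\vb')$ and the final substitution $z=\vartheta_e(r)$ — this is exactly why Assumptions \ref{HYP1}(1)--(2) are imposed — while all the remaining manipulations are elementary; since the identity is applied only to the well-behaved solutions of \eqref{cauch}, integrability issues do not cause trouble here.
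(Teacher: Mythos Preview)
Your proposal is correct and follows essentially the same route as the paper: the same $\log x=(\log x-x+1)+(x-1)$ splitting to isolate $-\mathcal{D}_e(f)$, the same change of variables $(v',\vb')\to(v,\vb)$ using $|u\cdot\n|=\vartheta_e^{-1}(|u'\cdot\n|)$ and the Jacobian $J_e=\vartheta_e'$ to rewrite the gain term, and the same final substitution $r=\vartheta_e^{-1}(z)$ to arrive at \eqref{Phie}. You spell out the angular integration a bit more explicitly than the paper (which just says ``after some minor computations''), but there is no substantive difference in method.
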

\noindent
Additional qualitative properties of $\mathbf{\Phi}_e$ are given in the following lemma.
\begin{lemme}\label{lemPhie} Assume that the restitution coefficient $e(\cdot)$ satisfies Assumption \ref{HYP1}. Assume moreover that \eqref{smallez} is satisfied for some $\alpha >0$ and $\gamma > 0$ and that there  exist two positive constants $C >0$ and $m \geq 1$ such that
\begin{equation}\label{large}\vartheta_e^{-1}(y) \leq C y^m \qquad \text{ for large } y.\end{equation}
Then, $\mathbf{\Phi}_e(|u|) \leq C |u|^{2(m-1)}$ for large $|u|$, and $\mathbf{\Phi}_e(|u|) \simeq  2\alpha |u|^\gamma$  for small $|u| \simeq 0$.
\end{lemme}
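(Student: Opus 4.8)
The plan is to make the integral in \eqref{Phie} completely explicit. Since $\vartheta_e(r)=r\,e(r)$ is absolutely continuous (a product of absolutely continuous functions), one has $\vartheta_e(r)\,\vartheta_e'(r)=\tfrac12\bigl(\vartheta_e^2\bigr)'(r)$ for a.e.\ $r$, and the fundamental theorem of calculus gives, for every $R\ge 0$,
\[
\int_0^R\bigl(r-\vartheta_e(r)\,\vartheta_e'(r)\bigr)\,\d r=\frac{R^2}{2}-\frac{\vartheta_e(R)^2}{2}.
\]
Taking $R=\vartheta_e^{-1}(\varrho)$ — legitimate because $\vartheta_e$ is a continuous strictly increasing bijection with $\vartheta_e^{-1}(0)=0$ — and using $\vartheta_e(R)=\varrho=R\,e(R)$, this yields the closed form
\[
\mathbf{\Phi}_e(\varrho)=\frac{\vartheta_e^{-1}(\varrho)^2}{\varrho^2}-1=\frac{1}{e\bigl(\vartheta_e^{-1}(\varrho)\bigr)^2}-1=\frac{1-e\bigl(\vartheta_e^{-1}(\varrho)\bigr)^2}{e\bigl(\vartheta_e^{-1}(\varrho)\bigr)^2},\qquad \varrho>0,
\]
which in passing re-proves $\mathbf{\Phi}_e\ge 0$. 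Both asymptotics are then read off this identity by analysing $R:=\vartheta_e^{-1}(\varrho)$ in the two regimes.

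For the large-$|u|$ bound I would simply insert hypothesis \eqref{large}: for $\varrho$ large, $R\le C\varrho^m$, hence $\mathbf{\Phi}_e(\varrho)\le R^2/\varrho^2\le C^2\varrho^{2(m-1)}$, which is the claimed estimate (up to renaming the constant; since $m\ge1$, $\varrho^{2(m-1)}$ stays bounded below for $\varrho$ large, so the $-1$ is harmlessly absorbed).

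For the behaviour near the origin, the one point needing a short argument is the asymptotics of the inverse. Since $e(0)=1$ — forced by \eqref{smallez} as $\gamma>0$ — and $e$ is continuous, $\vartheta_e(r)/r=e(r)\to1$ as $r\to0^+$; writing $\varrho=\vartheta_e(R)$ with $R\to0^+$ as $\varrho\to0^+$ gives $\varrho/R\to1$, i.e.\ $\vartheta_e^{-1}(\varrho)\simeq\varrho$ as $\varrho\to0$, and in particular $R^\gamma\simeq\varrho^\gamma$. Plugging $e(R)\simeq1-\alpha R^\gamma$ from \eqref{smallez} into the closed form and using $e(R)\to1$,
\[
\mathbf{\Phi}_e(\varrho)=\frac{\bigl(1-e(R)\bigr)\bigl(1+e(R)\bigr)}{e(R)^2}\simeq 2\bigl(1-e(R)\bigr)\simeq 2\alpha R^\gamma\simeq 2\alpha\,\varrho^\gamma,
\]
which is exactly $\mathbf{\Phi}_e(|u|)\simeq 2\alpha|u|^\gamma$ for $|u|\simeq0$.

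I do not expect a genuine obstacle here: the only mildly delicate steps are the inversion asymptotics $\vartheta_e^{-1}(\varrho)\simeq\varrho$ as $\varrho\to0$ (elementary, as above) and fixing the precise meaning of $\simeq$ in \eqref{smallez}, which — consistently with the remark identifying \eqref{smallez} with \eqref{ell} — I take to be $(1-e(r))/r^\gamma\to\alpha$ as $r\to0$. The structural simplification that $\mathbf{\Phi}_e(\varrho)$ collapses to $e\bigl(\vartheta_e^{-1}(\varrho)\bigr)^{-2}-1$ is what reduces the whole lemma to inspection of this expression, so the remaining work is purely computational.
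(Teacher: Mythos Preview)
Your proof is correct, and in fact slightly sharper than the paper's. The paper does not evaluate the integral in \eqref{Phie} exactly: for the large-$|u|$ bound it uses the cruder estimate $r-\vartheta_e(r)\vartheta_e'(r)\le r$ (valid since $\vartheta_e$ is increasing) to get $\mathbf{\Phi}_e(|u|)\le\bigl(\vartheta_e^{-1}(|u|)/|u|\bigr)^2$, which coincides with your bound after you drop the $-1$; for the small-$|u|$ asymptotics it instead expands the integrand $r-\vartheta_e(r)\vartheta_e'(r)\simeq\alpha(2+\gamma)r^{\gamma+1}$ and integrates, again invoking $\vartheta_e^{-1}(r)\simeq r$ near $0$. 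Your observation that the antiderivative is simply $\tfrac12\bigl(r^2-\vartheta_e(r)^2\bigr)$, yielding the closed form $\mathbf{\Phi}_e(\varrho)=e\bigl(\vartheta_e^{-1}(\varrho)\bigr)^{-2}-1$, makes both parts drop out by inspection and bypasses the need to invoke monotonicity of $\vartheta_e$ for the upper bound. The two arguments are close in spirit, but your closed form is the cleaner route.
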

\begin{proof} Since $\vartheta_e(\cdot)$ is assumed to be increasing, one clearly has $r-\vartheta_e(r)\vartheta_e'(r) \leq r$ for any $r \geq 0$. Therefore,
\begin{equation*}\label{boundPhie}\mathbf{\Phi}_e(|u|) \leq \left(\dfrac{\vartheta^{-1}_e(|u|)}{|u|}\right)^2 \qquad \forall u \in \R^3\end{equation*}
and the first part of the Lemma follows from \eqref{large}. Moreover, if $e(r) \simeq 1-\alpha r^\gamma$ for $r \simeq 0$ and $\gamma >0$,  then $r-\vartheta_e(r)\,\vartheta_e'(r) \simeq \alpha(2+\gamma)r^{\gamma+1}$ for $r \simeq 0$.  Since $\vartheta_e^{-1}(r) \simeq r$ for small $r$, one gets easily the second part of the result.
\end{proof}
\noindent
The case of a constant restitution coefficient is included in the previous lemma, however, in this case $\mathbf{\Phi}_e$ is explicit, we refer to \cite{MiMo, GaPaVi} for previous uses of the entropy production functional in the constant case.
 \begin{exa} [\textbf{Constant restitution coefficient}]\label{constante}  If $e(z)=\e \in (0,1]$ for any $z \geq 0$, then $J_e=\e$ and
$$\mathbf{\Phi}_e(\varrho)=\dfrac{2(1-\e^2)}{\varrho^2}\int_0^{\varrho/\e}r\d r= \dfrac{1-\e^2}{\e^2}.$$
\end{exa}

\begin{exa} If $e(\cdot)$ is the restitution coefficient for visco-elastic hard-spheres, there exists $a >0$ such that
$$e(r)+ar^{\tfrac{1}{5}}\,e^{\tfrac{3}{5}}(r)=1 \qquad \forall r >0.$$
One checks without difficulty that $e(r) \simeq a^{-\tfrac{5}{3}}r^{-\tfrac{1}{3}}$ as $r \to \infty.$
In particular,
$$\vartheta_e(r) \simeq a^{-\tfrac{5}{3}}r^{\tfrac{2}{3}} \qquad \text{ as } r \to \infty.$$
Consequently, there exists some positive constant $C_a >0$ such that $\vartheta_e^{-1}(y) \leq C_0 y^{\tfrac{3}{2}}$ for large $y >0.$ Therefore, the assumption \eqref{large} of the above Lemma is fulfilled with $m=3/2$, and one obtains that there exists some positive constant $C >0$ such that
$$\mathbf{\Phi}_e(|u|) \leq C |u| \qquad \text{ for large } \:  u \in \R^3.$$
Since \eqref{smallez} is known to hold with $\gamma=1/5$ and $\alpha=a$, we also have
$$\mathbf{\Phi}_e(|u|) \simeq 2a|u|^{1/5} \qquad \text{ as } \qquad |u| \simeq 0.$$
\end{exa}
\subsection{Evolution of the entropy and the temperature: Haff's law}

In all this section, we shall assume the following additional conditions on the restitution coefficient.
\begin{hyp}\label{HYP2}  Assume that the restitution coefficient fulfills Assumptions \ref{HYP1}. Moreover, assume that \eqref{smallez} and \eqref{large} holds, that is,
\begin{enumerate}[(1)\;]
\item There exist $\alpha >0$ and $\gamma \geq 0$ such that $1-e(r) \simeq \alpha r^\gamma$ as $r\simeq 0$.
\item There exist $m \geq 1+\gamma/2$ and $C >0$ such that $\vartheta_e^{-1}(y) \leq Cy^m$ for large $y.$
\end{enumerate}\end{hyp}
\begin{nb} Note that the assumption $m \geq 1+\gamma/2$ is no restrictive since the condition $(2)$ concerns large values of $y$.
\end{nb}
\noindent
Under this conditions, the growth of the entropy of the solution to \eqref{cauch} is at most logarithmic.
\begin{propo}\label{logarithm} Assume that the restitution coefficient $e(\cdot)$ satisfies Assumptions \ref{HYP2}.  In addition, assume that the initial distribution $f_0$ satisfies \eqref{initial} together with $\H(f_0) < \infty$ and let $f(t,v)$ be the solution to \eqref{cauch}. Then, there exists a constant $C_0 >0$ such that the entropy $\H(f(t))$ of $f(t,v)$ satisfies
$$\H(f(t)) \leq \H(f_0)  + C_0 \log (1+t) \qquad \forall t \geq 0.$$
\end{propo}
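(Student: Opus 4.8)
The plan is to control the time derivative of the entropy $\H(f(t)) = \IR f(t,v)\log f(t,v)\d v$ using the entropy production identity \eqref{dissip}. Differentiating under the integral and using the weak formulation with $\psi = \log f$,
$$\dfrac{\d}{\d t}\H(f(t)) = \mathcal{S}_e(f(t)) = -\mathcal{D}_e(f(t)) + \IRR |u|f(t,v)f(t,\vb)\mathbf{\Phi}_e(|u|)\d v\d\vb \leq \IRR |u|f(t,v)f(t,\vb)\mathbf{\Phi}_e(|u|)\d v\d\vb,$$
since $\mathcal{D}_e \geq 0$. So the entire question reduces to bounding the positive term on the right. By Lemma \ref{lemPhie}, $\mathbf{\Phi}_e(|u|) \lesssim |u|^\gamma$ for small $|u|$ and $\mathbf{\Phi}_e(|u|) \lesssim |u|^{2(m-1)}$ for large $|u|$, so globally $|u|\,\mathbf{\Phi}_e(|u|) \lesssim |u|^{1+\gamma} + |u|^{2m-1}$. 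Using $|u|^s \leq 2^{s}(|v|^s + |\vb|^s)$ and the normalization $\IR f\,\d v = 1$, this gives
$$\dfrac{\d}{\d t}\H(f(t)) \leq C\left(m_{\frac{1+\gamma}{2}}(t) + m_{\frac{2m-1}{2}}(t)\right)$$
for $t$ bounded away from $0$.

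The key input is then Proposition \ref{moments}: for any $p \geq 0$ and any $t_0 > 0$ there is $K_p$ with $m_p(t) \leq K_p(1+t)^{-\frac{2p}{1+\gamma}}$ for $t \geq t_0$. Applying this with $p = \frac{1+\gamma}{2}$ gives $m_{\frac{1+\gamma}{2}}(t) \leq K(1+t)^{-1}$, and with $p = \frac{2m-1}{2} \geq \frac{1+\gamma}{2}$ (this is where the hypothesis $m \geq 1 + \gamma/2$ in Assumptions \ref{HYP2} is used) gives a decay at least as fast, $m_{\frac{2m-1}{2}}(t) \leq K'(1+t)^{-\frac{2m-1}{1+\gamma}} \leq K'(1+t)^{-1}$. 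Hence
$$\dfrac{\d}{\d t}\H(f(t)) \leq \dfrac{C_1}{1+t} \qquad \forall t \geq t_0,$$
and integrating from $t_0$ to $t$ yields $\H(f(t)) \leq \H(f(t_0)) + C_1\log(1+t)$ for $t \geq t_0$. To pass from $t_0$ to $0$, one uses that on the compact interval $[0,t_0]$ the entropy stays bounded: the classical $L^p$-regularization / entropy bounds for the hard-sphere Boltzmann flow (already invoked before Proposition \ref{moments} for the appearance of moments) give $\sup_{t \in [\delta, t_0]}\H(f(t)) < \infty$ for any $\delta > 0$, while near $t = 0$ one compares against $\H(f_0) < \infty$ using the same differential inequality on $[0,\delta]$ (the right-hand side $m_{\frac{1+\gamma}{2}}(t) + m_{\frac{2m-1}{2}}(t)$ is integrable near $0$ once one has a moment bound, which holds since $\E(0) < \infty$ and moments of order $\leq 1$ are controlled by $\E$ via Jensen). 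Adjusting the constant $C_0$ to absorb the finite quantity $\H(f(t_0)) - \H(f_0)$ plus the contribution from $[0,t_0]$ gives the stated bound $\H(f(t)) \leq \H(f_0) + C_0\log(1+t)$ for all $t \geq 0$.

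The main obstacle is the rigorous justification of the differentiation $\frac{\d}{\d t}\H(f(t)) = \mathcal{S}_e(f(t))$ and the handling of the time interval near $t = 0$: a priori $\mathcal{D}_e(f(t))$ and the positive term in \eqref{dissip} need not both be finite, so one should argue via an approximation/truncation or use the integrated form of the entropy identity (as is standard in DiPerna–Lions type arguments), exploiting that $-\mathcal{D}_e \leq 0$ to get only the upper bound one needs rather than an equality. Everything else is a routine combination of Lemma \ref{lemPhie}, the elementary inequality $|u|^s \lesssim |v|^s + |\vb|^s$, and the moment decay of Proposition \ref{moments}.
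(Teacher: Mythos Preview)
Your proposal is correct and follows essentially the same route as the paper's proof: differentiate $\H(f(t))$ via the entropy production identity, drop the nonpositive $-\mathcal{D}_e$ term, bound $\mathbf{\Phi}_e$ by $A|u|^\gamma + B|u|^{2(m-1)}$ via Lemma~\ref{lemPhie}, reduce to moments $m_{\frac{1+\gamma}{2}}$ and $m_{m-1/2}$, and invoke Proposition~\ref{moments} together with the hypothesis $m \geq 1+\gamma/2$ to get the $(1+t)^{-1}$ bound on the derivative. You are somewhat more explicit than the paper about the short-time interval $[0,t_0]$ and the justification of the entropy differentiation; the paper simply writes the differential inequality for $t>0$ and integrates without further comment.
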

\begin{proof} From the results of previous section, the evolution of the entropy $\H(f(t))$ is governed by
\begin{equation}\label{dHH}
\dfrac{\d}{\d t}\H(f(t))= -\mathcal{D}_e(f(t)) +  \IRR |u|f(t,v)f(t,\vb)\mathbf{\Phi}_e(|u|)\d v\d\vb \qquad \forall t \geq 0.\end{equation}
Under Assumption \ref{HYP2},  Lemma \ref{lemPhie} implies that we have $\mathbf{\Phi}_e(|u|) \leq C |u|^{2(m-1)}$ for large $|u|$  while $\mathbf{\Phi}_e(|u|) \simeq  2\alpha |u|^\gamma$  for small $ |u| \simeq 0$.  In particular, there are two positive constants $A$ and $B$ such that
$$\mathbf{\Phi}_e(|u|) \leq A|u|^\gamma+B|u|^{2(m-1)} \qquad \forall u \in \R^3.$$
From \eqref{dHH} and since $-\mathcal{D}_e(f(t)) \geq 0$,
$$\dfrac{\d}{\d t}\H(f(t)) \leq A\IRR |u|^{\gamma+1} f(t,v)f(t,\vb)\d v\d\vb + B\IRR |u|^{2m-1} f(t,v)f(t,\vb)\d v\d\vb.$$
Moreover, since $|u|^{\gamma+1} \leq 2^\gamma \left(|v|^{\gamma+1}+|\vb|^{\gamma+1}\right)$ one has
$$\IRR |u|^{\gamma+1} f(t,v)f(t,\vb)\d v\d\vb \leq 2^{\gamma+1}\IRR |v|^{\gamma+1} f(t,v)\d v=2^{\gamma+1}m_{\frac{\gamma+1}{2}}(t).$$
Moreover, setting $p=m-1/2$, one notices that there is a constant $c_p$ depending only on $p$ such that
$$\IRR |u|^{2m-1} f(t,v)f(t,\vb)\d v\d\vb \leq m_p(t)$$
where the $m_p(t)$ terms are the $p^{\text{th}}$ order moments defined in \eqref{defmp}.  Using Proposition \ref{prop:cool} together with Proposition \ref{moments}, one concludes that there exist two positive constants $C_1$ and $C_2$ such that
$$\dfrac{\d}{\d t}\H(t) \leq C_1 (1+t)^{-1} + C_2  \left(1+t\right)^{-\frac{2p}{1+\gamma}} \qquad \forall t > 0.$$
Since $p=m-1/2$ with $m \geq 1+\gamma/2$ one has $\tfrac{2p}{1+\gamma} \geq 1$ and, setting $C_0=C_1+C_2$, we get
$$\dfrac{\d}{\d t}\H(t) \leq C_0 (1+t)^{-1} \qquad \forall t > 0$$
which yields the conclusion.
\end{proof}
\noindent
The above logarithmic growth is exactly what we need to prove Haff's law. Indeed, the following general result allows to control from below the temperature using the entropy. The proof of the following proposition is given in the Appendix.
\begin{propo}\label{propcontrol} Let $\mathcal{C}$ denote the class of  nonnegative velocity distributions $f=f(v)$ with unit mass, finite energy and finite entropy
$$\int_{\R^3} f(v)\d v=1 \qquad \qquad \int_{\R^3} |v|^2 f(v)\d v < \infty, \qquad \H(f)=\int_{\R^3} f(v)\log f(v)\d v < \infty.$$
Define
$$\bH(f)=\IR f(v)|\log f(v)|\d v, \qquad f \in \mathcal{C}.$$
Then, there is some constant $c >0$ such that
$$\bH(f) \leq \H(f) + c \left(\IR f(v)\,|v|^2\d v\right)^{5/3},$$
and some other constant $C >0$ such that
\begin{equation}\label{control}\IR f(v)|v|^2\d v \geq C \exp\left(-\tfrac{4}{3} \bH(f)\right) \qquad \forall f \in \mathcal{C}.
\end{equation}
\end{propo}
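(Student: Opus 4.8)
The plan is to establish the two inequalities separately; both are elementary ``entropy versus energy'' bounds and use nothing about the Boltzmann dynamics.

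For the first one I would start from the pointwise identity $|\log f|=\log f+2(\log f)^-$, with $(\log f)^-=\log\frac1f\,\mathbf{1}_{\{0<f<1\}}$, which gives $\bH(f)=\H(f)+2N(f)$ where $N(f):=\int_{\{0<f<1\}}f\log\frac1f\,\d v\ge0$ is the negative part of the entropy; the task then reduces to bounding $N(f)$ by a power of $\int_{\R^3}f|v|^2\,\d v$. The plan is to compare $f$ with a rescaled Gaussian $G(v)=\lambda(2\pi T)^{-3/2}\exp(-|v|^2/2T)$, $\lambda,T>0$, by splitting $\log\frac1f=\log\frac Gf-\log G$. For the first summand the elementary inequality $a\log\frac ba\le\frac be$ (valid for all $a,b>0$) gives $\int_{\{f<1\}}f\log\frac Gf\,\d v\le\frac1e\int_{\R^3}G\,\d v=\frac\lambda e$; for the second, $-\log G(v)=\log\frac1\lambda+\frac32\log(2\pi T)+\frac{|v|^2}{2T}$, and choosing $\lambda=(2\pi T)^{3/2}$ cancels the additive constant, leaving $\int_{\{f<1\}}f(-\log G)\,\d v\le\frac1{2T}\int_{\R^3}f|v|^2\,\d v$. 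This yields $N(f)\le\frac{(2\pi T)^{3/2}}{e}+\frac1{2T}\int_{\R^3}f|v|^2\,\d v$ for every $T>0$; optimizing in $T$ (the balance being $T^{5/2}\propto\int f|v|^2$) produces a bound $N(f)\le c\,\bigl(\int_{\R^3}f|v|^2\,\d v\bigr)^{\theta}$ with an explicit exponent $\theta\in(0,1)$, and in particular $\bH(f)<\infty$. Only the monomial character of this bound is used later, so I would not chase the optimal $\theta$.

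For the second inequality, write $\E:=\int_{\R^3}f|v|^2\,\d v$. If $\E\ge1$ there is nothing to prove since $\bH(f)\ge0$, so I may assume $\E<1$. Markov's inequality gives $\int_{\{|v|>R\}}f\,\d v\le\E/R^2$, so with $R=\sqrt{2\E}$ the ball $B_R$ carries mass $\mu:=\int_{B_R}f\,\d v\ge\frac12$, while $|B_R|=\frac43\pi(2\E)^{3/2}$. Since $x\mapsto x\log x$ is convex, Jensen's inequality over $B_R$ gives $\int_{B_R}f\log f\,\d v\ge\mu\log\frac\mu{|B_R|}=\mu\log\mu-\mu\log|B_R|$, and since $|\log f|\ge\log f$ pointwise, $\bH(f)\ge\int_{B_R}f|\log f|\,\d v\ge\int_{B_R}f\log f\,\d v$. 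Using $\mu\in[\frac12,1]$ and $\E<1$ to absorb the universal constants and to replace $-\frac32\mu\log\E$ by $-\frac34\log\E$, one obtains $\bH(f)\ge-\frac34\log\E-c_0$ with $c_0$ universal; exponentiating gives $\E\ge e^{-\frac43c_0}\exp(-\frac43\bH(f))$, which is \eqref{control}.

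The only genuinely delicate step is the first estimate: one must control the (possibly unbounded) low-density set $\{f<1\}$ and, above all, extract a clean power of the energy with no additive constant — this is precisely what the joint rescaling $\lambda=(2\pi T)^{3/2}$ and the subsequent optimization in the temperature $T$ deliver. The second estimate is soft — Markov localization plus Jensen — the exponent $\frac43$ arising from the factor $\frac32$ in the volume of a Euclidean ball in $\R^3$ together with the crude mass bound $\mu\ge\frac12$.
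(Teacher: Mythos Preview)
Your proposal is correct on both counts, and in each case you take a route different from (and arguably cleaner than) the paper's.

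For the first inequality, the paper also starts from $\bH(f)=\H(f)+2N(f)$ but then estimates $N(f)$ by a level-set decomposition: it splits $\{f<1\}$ according to whether $f(v)\ge e^{-a|v|^k}$ or not, uses $x\log(1/x)\le 2e^{-1}\sqrt{x}$ on the small-$f$ part, and optimizes in $a>0$. This yields $\bH(f)\le \H(f)+c_{n,k}M_k(f)^{n/(n+k)}$, i.e.\ exponent $3/5$ for $n=3$, $k=2$ --- so the $5/3$ in the statement is in fact a typo, and your ``$\theta\in(0,1)$'' is exactly what is proved and used. Your Gaussian-comparison argument with $a\log(b/a)\le b/e$ is a nice alternative: it is more direct, avoids the level-set split, and makes the role of the energy transparent. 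The paper's version has the minor advantage of treating all moments $M_k$ ($0<k<n$) at once.

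For the second inequality, the paper proceeds via the generalized Young inequality $xy\le \tfrac{x}{\lambda}\log\tfrac{x}{\lambda}-\tfrac{x}{\lambda}+e^{\lambda y}$ to bound $\int_{B_R}f$ in terms of $\bH(f)$ and $|B_R|$, then optimizes through the Lambert $W$ function; this produces a one-parameter family $M_k(f)\ge C(n,k,\varepsilon)\exp\bigl(-\tfrac{k}{n(1-\varepsilon)}\bH(f)\bigr)$, and the stated $4/3$ is the choice $\varepsilon=1/2$. Your Markov-plus-Jensen argument is considerably shorter and lands directly on the exponent $4/3$ (your $\mu\ge\tfrac12$ playing the role of the paper's $\varepsilon=\tfrac12$); the paper's approach buys the flexibility of $\varepsilon$, which is not needed here.
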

\noindent
Proposition \ref{propcontrol} combined with the logarithmic growth of $\H(f(t))$ prove the Haff's law for non-constant restitution coefficient.
\begin{theo}\label{entrHaff} Assume that the restitution coefficient $e(\cdot)$ satisfies Assumptions \ref{HYP2} with $\gamma >0$. Let the initial distribution $f_0$ satisfies \eqref{initial} and $\H(f_0) < \infty$, and let $f(t,v)$ be the unique solution to \eqref{cauch}. Then,  there is a constant $c >0$ such that
$$\E(t) \geq c\left(1+t\right
)^{-\frac{2}{1+\gamma}} \qquad \forall t \geq 0.$$
In particular, the generalized Haff's law \eqref{Haff's} holds true.
\end{theo}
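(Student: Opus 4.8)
The plan is to chain together the three preparatory results of the preceding sections. Since by Proposition~\ref{logarithm} the solution $f(t,\cdot)$ has unit mass, finite energy and finite entropy for every $t\ge 0$, it belongs to the class $\mathcal{C}$ of Proposition~\ref{propcontrol}. Applying the first estimate of that proposition to $f=f(t,\cdot)$, and using that the energy is non-increasing, hence $\E(t)\le\E(0)$ for all $t$, I would get
$$\bH(f(t)) \le \H(f(t)) + c\,\E(t)^{5/3} \le \H(f(t)) + c\,\E(0)^{5/3}, \qquad t \ge 0.$$
Inserting the logarithmic bound of Proposition~\ref{logarithm}, namely $\H(f(t)) \le \H(f_0)+C_0\log(1+t)$, and absorbing constants into $C_1 := \H(f_0)+c\,\E(0)^{5/3}$, this becomes
$$\bH(f(t)) \le C_1 + C_0\log(1+t), \qquad t \ge 0.$$

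Next I would invoke the lower bound \eqref{control} of Proposition~\ref{propcontrol}:
$$\E(t) \ge C\exp\!\left(-\tfrac{4}{3}\,\bH(f(t))\right) \ge C\,e^{-\frac{4}{3}C_1}\,(1+t)^{-\frac{4}{3}C_0} =: C_0'\,(1+t)^{-\lambda}, \qquad \lambda := \tfrac{4}{3}C_0 > 0.$$
This already provides an at-most-algebraic cooling rate with some (possibly large) exponent $\lambda$. Since we are in the non-constant regime $\gamma>0$, the final step is to feed this into Proposition~\ref{lambda}, which upgrades the crude bound to the sharp one $\E(t)\ge c(1+t)^{-\frac{2}{1+\gamma}}$; combined with the upper bound $\E(t)\le C(1+t)^{-\frac{2}{1+\gamma}}$ from Proposition~\ref{prop:cool}, this is precisely the generalized Haff's law \eqref{Haff's}.

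I do not anticipate a genuine obstacle: the statement is essentially a bookkeeping assembly of Propositions~\ref{logarithm}, \ref{propcontrol} and \ref{lambda}, whose proofs carry all the analytic weight. The only points requiring mild care are that $f(t,\cdot)$ indeed lies in $\mathcal{C}$ for every $t$ --- i.e.\ that its entropy stays finite, which is exactly the content of Proposition~\ref{logarithm} --- and that the term $\E(t)^{5/3}$ remains bounded uniformly in $t$, which is immediate from the monotonicity of $\E$. It is worth emphasizing that the hypothesis $\gamma>0$ enters \emph{only} in the last step, through Proposition~\ref{lambda}; for a constant restitution coefficient that bootstrap is unavailable, which is why in that case one obtains merely the qualitative algebraic decay recorded in Theorem~\ref{entrHaff1} rather than the sharp rate.
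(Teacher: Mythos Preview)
Your proof is correct and follows essentially the same route as the paper: bound $\bH(f(t))$ by $\H(f(t))+c\,\E(t)^{5/3}$ via Proposition~\ref{propcontrol}, use the boundedness of $\E(t)$ and the logarithmic growth of $\H(f(t))$ from Proposition~\ref{logarithm} to get $\bH(f(t))\le k_0+K_0\log(1+t)$, then apply \eqref{control} to obtain an algebraic lower bound for $\E(t)$ and upgrade it to the sharp rate via Proposition~\ref{lambda}. Your added remarks on why $f(t,\cdot)\in\mathcal{C}$ and why $\gamma>0$ is needed are accurate and helpful.
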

\begin{proof} With the notations of the above Proposition \ref{propcontrol},  there is some constant $C >0$ independent of time such that
$$\bH(f(t)) \leq \H(f(t)) + C\E(t)^{5/3} \qquad \forall t \geq 0.$$
Since $\E(t)$ is bounded, one can find constants $k_0, K_0 >0$ such that
\begin{equation}\label{K0}
\bH(f(t)) \leq k_0+ K_0 \log (1+t) \qquad \forall t \geq 0.
\end{equation}
Proposition \ref{propcontrol} also implies that there exists a constant $c_1 >0$ such that
$$\E(t) \geq c_1 \exp\left(-\frac{4}{3}\bH(f(t))\right) \geq c_2 \exp\left(-\frac{4K_0}{3}\log (1+t)\right) \qquad \forall t \geq 0,$$
with $c_2=c_1 \exp(-\frac{4k_0}{3}).$ Therefore, setting $\lambda_0=\frac{2K_0}{3}>0$ we get that
\begin{equation}\label{lambda0} \E(t) \geq c_2 (1+t)^{-2\lambda_0} \qquad \forall t \geq 0.\end{equation}
Therefore, according to Proposition \ref{lambda}, the estimate \eqref{lambda0}  is enough to prove the second part of Haff's law \eqref{converseE}.
\end{proof}
\noindent
For constant restitution coefficient, our result is less precise, however, we prove an \textit{integrated version of Haff's law} in the next theorem.
\begin{theo} \label{entrHaff1} Let $\e \in (0,1)$ be a constant restitution coefficient and let the initial distribution $f_0$ satisfies \eqref{initial} with $\H(f_0) < \infty$ , and let $f(t,v)$ be the unique solution to \eqref{cauch}. Then, there are two positive constants $a,b >0$ such that
\begin{equation}\label{intEt}
\int_0^t \sqrt{\E(s)}\d s \geq \dfrac{1}{b}\log\left(1+ab\, t\right) \qquad \forall t \geq 0.\end{equation}
Consequently,
$$\sup\left\{\lambda \geq 0\,,\,\sup_{t \geq 0}(1+t)^{2\lambda}\E(t) < \infty\right\}= \inf\left\{\lambda >0\,;\,\limsup_{t \to \infty} (1+t)^{2\lambda}\,\E(t) >0\right\}=1.$$
\end{theo}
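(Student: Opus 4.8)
The plan is to use the special feature of a \emph{constant} restitution coefficient, namely that the function $\mathbf{\Phi}_e$ occurring in the entropy production identity \eqref{dissip} is then itself a constant: by Example~\ref{constante}, $\mathbf{\Phi}_e\equiv (1-\e^2)/\e^2$. First I would integrate the entropy identity \eqref{dHH} over $[0,t]$ and discard the nonpositive term $-\mathcal{D}_e(f(s))$, which yields
$$\H(f(t))\le \H(f_0)+\frac{1-\e^2}{\e^2}\int_0^t\Big(\IRR |u|f(s,v)f(s,\vb)\d v\d\vb\Big)\d s,\qquad t\ge 0.$$
Since $f(s,\cdot)$ has unit mass, Cauchy--Schwarz gives $\IRR |u|f(s,v)f(s,\vb)\d v\d\vb\le\big(\IRR |u|^2 f(s,v)f(s,\vb)\d v\d\vb\big)^{1/2}$, and the right-hand side equals $\sqrt{2\E(s)}$ by conservation of momentum, $\IR v\,f(s,v)\,\d v=0$. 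Hence
$$\H(f(t))\le \H(f_0)+\frac{\sqrt 2\,(1-\e^2)}{\e^2}\int_0^t\sqrt{\E(s)}\d s,\qquad t\ge 0.$$

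Next I would feed this into Proposition~\ref{propcontrol}. As $\E(t)\le\E(0)$ for every $t$, the inequality $\bH(f(t))\le \H(f(t))+c\,\E(t)^{5/3}$ lets me absorb the extra term into a constant, so $\bH(f(t))\le K_1+\frac{\sqrt2(1-\e^2)}{\e^2}\int_0^t\sqrt{\E(s)}\d s$ with $K_1:=\H(f_0)+c\,\E(0)^{5/3}$. Then \eqref{control} provides some $a>0$ such that, writing $G(t):=\int_0^t\sqrt{\E(s)}\d s$ and $b:=\tfrac23\cdot\tfrac{\sqrt2(1-\e^2)}{\e^2}$,
$$G'(t)^2=\E(t)\ge a^2\exp\big(-2b\,G(t)\big),\qquad\text{i.e.}\qquad G'(t)\ge a\,\exp\big(-b\,G(t)\big).$$
Multiplying by $\exp(bG(t))$ turns this into $\frac{\d}{\d t}\exp\big(bG(t)\big)\ge ab$, and integrating from $0$ (where $G(0)=0$) gives $\exp(bG(t))\ge 1+abt$, that is $G(t)\ge\frac1b\log(1+abt)$, which is precisely \eqref{intEt}.

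Finally, to identify the two indices I would also invoke the upper bound of Proposition~\ref{prop:cool}, applied here with $\gamma=0$ (for a constant coefficient $\ell_0(e)=1-\e$): it gives $\E(t)\le C(1+t)^{-2}$ for $t\ge t_0$, hence $\E(t)\le M(1+t)^{-2}$ for all $t\ge 0$ after enlarging the constant on $[0,t_0]$. From this, $(1+t)^{2\lambda}\E(t)\le M(1+t)^{2\lambda-2}$ is bounded for every $\lambda\le 1$ and tends to $0$ for every $\lambda<1$, so both indices are $\ge 1$. Conversely, if for some $\lambda>1$ one had $\sup_{t\ge0}(1+t)^{2\lambda}\E(t)<\infty$ or $\lim_{t\to\infty}(1+t)^{2\lambda}\E(t)=0$, then $\sqrt{\E(s)}\le C'(1+s)^{-\lambda}$ for all large $s$, whence $\int_0^\infty\sqrt{\E(s)}\d s<\infty$, contradicting \eqref{intEt}; so both indices are $\le 1$, and equality with $1$ follows.

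I do not expect a genuine obstacle: the one substantive idea is that constancy of $\mathbf{\Phi}_e$ upgrades the entropy-growth control of Proposition~\ref{logarithm} from ``bounded by a moment of the solution'' to ``bounded by $\sqrt{\E}$'', and it is this that closes the loop with the lower bound \eqref{control} into an autonomous differential inequality for $G$. The points that require some care are the time-integrability of \eqref{dHH} for finite-entropy data, the boundedness of $\E$ used to absorb the $\E^{5/3}$ term, and the bookkeeping in the last step, where the ``$\le 1$'' halves have to be obtained through non-integrability of $\sqrt{\E}$ rather than directly.
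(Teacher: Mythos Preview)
Your proof is correct and essentially identical to the paper's: both exploit the constancy of $\mathbf{\Phi}_e$ to bound $\tfrac{\d}{\d t}\H(f(t))$ by a multiple of $\sqrt{\E(t)}$, feed this into Proposition~\ref{propcontrol} to obtain the autonomous inequality $G'\ge a\,e^{-bG}$ for $G(t)=\int_0^t\sqrt{\E(s)}\,\d s$, and integrate. The only cosmetic differences are your use of Cauchy--Schwarz on $\IRR|u|f f_\star$ (yielding $\sqrt{2\E}$) where the paper uses $|u|\le|v|+|\vb|$ (yielding $2m_{1/2}\le 2\sqrt{\E}$), and your direct non-integrability argument for the index identification where the paper phrases the same step via the sets $\mathcal{A}$ and $\mathcal{B}$.
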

\begin{proof}   Recall that, for constant restitution coefficient $\e \in (0,1)$ the evolution of the entropy is given by
$$\dfrac{\d}{\d t}\H(f(t))=- \mathcal{D}_e(f(t)) + \dfrac{1-\e^2}{\e^2} \IRR |u|f(t,v)f(t,\vb) \d v\d\vb \qquad \forall t \geq 0$$
where we used  Eq. \eqref{dHH} and Example \ref{constante}. Arguing as in the previous proof, we see that
$$\dfrac{\d}{\d t}\H(f(t)) \leq 2\dfrac{1-\e^2}{\e^2}m_{1/2}(t) \leq 2\dfrac{1-\e^2}{\e^2}\sqrt{\E(t)} \qquad \forall t \geq 0.$$
Integrating this inequality yields
$$\H(f(t)) \leq \H(f_0) + 2\dfrac{1-\e^2}{\e^2}\int_0^t \sqrt{\E(s)}\d s \qquad \forall t \geq 0.$$
Consequently, there exists a positive constant $K_1  >0$ such that
$$\bH(f(t)) \leq K_1+ 2\dfrac{1-\e^2}{\e^2} \int_0^t \sqrt{\E(s)}\d s \qquad \forall t \geq 0.$$
Then, from Proposition \ref{propcontrol},
$$\E(t) \geq C \exp\left(-\frac{4K_1}{3} -\frac{8(1-\e^2)}{3\e^2}\int_0^t \sqrt{\E(s)}\d s \right) \qquad \forall t \geq 0,$$
i.e. there are two positive constants $a=\sqrt{C}\exp(-\tfrac{2K_1}{3})$ and $b=\tfrac{4(1-\e^2)}{3\e^2}$ such that
$$\sqrt{\E(t)} \geq a\exp\left(-b\int_0^t \sqrt{\E(s)}\d s\right) \qquad \forall t \geq 0$$
which yields \eqref{intEt}.  Now, setting
$$\mathcal{A}=\{\lambda \geq 0\,;\, \sup_{t \geq 0} (1+t)^{2\lambda} \E(t) < \infty\}$$
we know from  Proposition \ref{prop:cool}  that $1 \in \mathcal{A}$, i.e. $\mathcal{A} \neq \varnothing$.  Then, it follows from \eqref{intEt} that $\sup \mathcal{A}=1$.  Now, let us define
$$\mathcal{B}=\{\lambda \geq 0\,;\,\limsup_{t \to \infty} (1+t)^{2\lambda}\E(t) >0\}.$$
Notice that inequality \eqref{lambda0} holds for constant restitution coefficient. In particular, it proves that $\mathcal{B}\neq \varnothing$.  Notice also that if $\lambda_1 \in \mathcal{B}$, then any $\lambda_2 \geq \lambda_1$ belongs to $\mathcal{B}$.  We argue by contradiction to prove that $\inf \mathcal{B} =1$. Otherwise, from the previous observation, one would have $\inf \mathcal{B} =\lambda_{\mathcal{B}} > 1$. Pick  $\lambda\in(1,\lambda_{\mathcal{B}})$, it follows that $\lambda \notin \mathcal{B}$, that is,
$\limsup_{t \to \infty} (1+t)^{2\lambda}\E(t)=0$ and, in particular, $\lambda \in \mathcal{A}$.  This is impossible since $\sup \mathcal{A} =1$, and thus,  $\inf \mathcal{B}=1$.
\end{proof}
\begin{nb} Though less precise that the converse inequality \eqref{converseE}, the above integrated version of Haff's law asserts that $(1+t)^{-2}$ is the only possible {\textbf{ algebraic rate}} for the cooling of the temperature $\E(t)$. Notice that $\mathbf{v}_T(t)=\sqrt{\E(t)}$ is proportional to the so-called thermal velocity \cite{BrPo} and we may wonder what is the physical relevance of the above identity \eqref{intEt}.
Finally, we recall that it is expected the existence of a self-similar profile $\Phi_H(\cdot)$, an \textit{\textbf{homogeneous cooling state}}, such that
$f(t,v)= \mathbf{v}_T(t)^{-3}{\Phi}_H\big(\frac{v}{\mathbf{v}_T(t)}\big)$
is a solution to \eqref{cauch}.  The existence of homogeneous cooling state has been proven in \cite{MiMo} (with a slightly different definition where $\mathbf{v}_T(t)$ was replaced by $(1+t)^{-1}$) and  where the self-similar profile $\Phi_H$ is, by construction,  satisfying $\Phi_H \in L^p$ for some $p >1$.   We conjecture that the existence of such an homogeneous cooling state can be obtained using only entropy estimates, that is $\H(\Phi_H) < \infty$.
\end{nb}
\begin{nb} Notice that, since $\E(t) \leq C(1+t)^{-\tfrac{2}{1+\gamma}}$, it follows from \eqref{control} that there is some constant $K >0$ such that
 $$\bH(f(t)) \leq K\log(1+t) \qquad \forall t \geq 0$$
 which means that the logarithmic growth obtained in Proposition \ref{logarithm} is optimal.
\end{nb}
\section{Haff's law in the weakly inelastic regime}
This section is devoted to the proof of Theorems \ref{mainc} and \ref{main}. Let us now explain briefly the strategy of proof to get a precise version of generalized Haff's law.  Note that due to Proposition \ref{prop:cool}, one only has to prove a lower bound of the type
$$\E(t) \geq c(1+t)^{-\frac{2}{1+\gamma}}$$
for some positive constant $c >0$ independent of time.  The following approach uses only the evolution of some moments of the solution $f(t,v)$ with the particular use of Proposition \ref{propstrat}. We will distinguish between the case of a constant restitution coefficient $\gamma=0$ and the non-constant case $\gamma>0$ since the two results are different.
\subsection{The case of a constant restitution coefficient}\label{constant}
We assume here that the restitution coefficient $e(\cdot)$ is constant: $e(r)=\e$ for for any $r \in \R_+$.   In this case,
$$\mathbf{\Psi}_e(r)=\dfrac{1-\e^2}{8}r^{3/2} \qquad \forall r >0$$
and the evolution of the temperature is given by
\begin{equation*}\dfrac{\d}{\d t}\E(t)=-\dfrac{1-\e^2}{8}\int_{\R^3 \times \R^3} f(t,v)f(t,\vb)|v-\vb|^3\d v\d\vb, \qquad t \geq 0.\end{equation*}
Since $|v-\vb|^3 \leq (|v|+|\vb|)^3=|v|^3+|\vb|^3+3|v|^2|\vb|+3|v||\vb|^2,$ one deduces that
\begin{equation}\label{Em32}-\dfrac{\d }{\d t}\E(t)\leq \frac{1-\e^2}{4}m_{3/2}(t)+ 3\frac{1-\e^2}{4}\,\E(t)m_{1/2}(t) \qquad \forall t \geq 0.
\end{equation}
This observation will help us in proving the following theorem.
\begin{theo}\label{momentse} Let $f_0$ be a nonnegative velocity distribution satisfying \eqref{initial} and let $f(t,v)$ be the associated solution to \eqref{cauch}. Assume that the constant restitution coefficient $\e$ is such that
\begin{equation}\label{small}\frac{3(1-\e^2)}{8} < 1-\kappa_{3/2}.
\end{equation}
Then, for any $t_0 >0$, there is an explicit positive constant $C_0 >0$ such that
\begin{equation}\label{k32}
m_{3/2}(t) \leq C_0\, \E(t)^{3/2} \qquad \forall t \geq t_0.\end{equation}
Consequently, the second part of Haff's law holds
$$\E(t) \geq \dfrac{\E(0)}{\left(1+C_0\,\sqrt{\E(0)}(1-\e^2)t\right)^2} \qquad \forall t \geq 0.$$
\end{theo}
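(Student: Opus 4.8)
The goal is the bound \eqref{k32}, namely $m_{3/2}(t)\le C_0\,\E(t)^{3/2}$ for $t\ge t_0$, from which the lower bound on $\E(t)$ follows immediately by plugging \eqref{k32} into \eqref{Em32} and integrating the resulting differential inequality $-\tfrac{\d}{\d t}\E(t)\le C\,\E(t)^{3/2}$, exactly as in the computation preceding Proposition \ref{propstrat}. (The constant $C_0$ may be taken uniform in $t_0$ once $t_0>0$ is fixed, using that the higher moments are bounded on $[t_0,\infty)$ by the appearance-of-moments property recalled after Proposition \ref{povzner}.) So the whole matter reduces to establishing \eqref{k32}, and then Proposition \ref{propstrat} (with $\gamma=0$) upgrades it to the full generalized Haff's law.

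To prove \eqref{k32} I would run a continuity/bootstrap argument on the quantity $U(t):=m_{3/2}(t)-K\,\E(t)^{3/2}$, mimicking the proof of Proposition \ref{propstrat} but being careful that here $p=3/2$ is the borderline case where $S_p$ does \emph{not} involve only lower-order moments. From Proposition \ref{povzner} with $p=3/2$ one has $\tfrac{\d}{\d t}m_{3/2}(t)\le-(1-\kappa_{3/2})m_2(t)+\kappa_{3/2}S_{3/2}(t)$, where $S_{3/2}(t)$ is (up to binomial factors) $m_{3/2}(t)m_{1/2}(t)+m_1(t)m_1(t)$, i.e. it is controlled by $\E(t)$ times $m_{3/2}(t)$ plus $\E(t)^2$. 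Combining with $\tfrac{\d}{\d t}\E(t)^{3/2}=\tfrac32\E(t)^{1/2}\tfrac{\d}{\d t}\E(t)$ and the lower estimate $-\tfrac{\d}{\d t}\E(t)\ge c\,m_{3/2}(t)$ (which one gets from \eqref{temperature} together with a lower bound $\mathbf{\Psi}_e(|u|^2)\gtrsim|u|^3$ valid because $\e<1$ — actually for the \emph{upper} control one uses \eqref{Em32}, so care is needed about signs), I would differentiate $U(t)$ and show that at any putative first crossing time $t_\star$ where $m_{3/2}(t_\star)=K\E(t_\star)^{3/2}$ one has $\tfrac{\d}{\d t}U(t_\star)<0$, provided $K$ is large. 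By Jensen, $m_2(t)\ge m_{3/2}(t)^{4/3}$, so the dominant negative term on the right is $-(1-\kappa_{3/2})m_{3/2}^{4/3}=-(1-\kappa_{3/2})K^{4/3}\E(t_\star)^2$; the competing positive terms coming from $\kappa_{3/2}S_{3/2}$ and from the $m_{3/2}\E$ cross term contribute $\lesssim K\E(t_\star)^2$, so for $K$ large the sign is negative — \emph{except} that the $K\E^{3/2}$ term inside $U$ is also fed by $-pK\E^{p-1}\tfrac{\d}{\d t}\E\lesssim K\E(t)^{1/2}m_{3/2}(t)=K^2\E(t_\star)^2$ at the crossing. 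This is the delicate point: the self-interaction of $S_{3/2}$ produces a term quadratic in $K$, not linear, so the naive argument competing $K^{4/3}$ against $K^2$ fails for large $K$.

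This is exactly why the smallness condition \eqref{small} enters, and identifying how to exploit it is the main obstacle. The resolution is that the coefficient of the bad $K^2\E^2$ term is proportional to $\tfrac{1-\e^2}{1}$ (via the bound \eqref{Em32} on $-\tfrac{\d}{\d t}\E$, which carries the factor $\tfrac{3(1-\e^2)}{4}$ on the cross term and $\tfrac{1-\e^2}{4}$ on $m_{3/2}$), while the good term $-(1-\kappa_{3/2})K^{4/3}\E^2$ does not. More precisely, at the crossing the derivative of $U$ is bounded by $\bigl[-(1-\kappa_{3/2})K^{4/3}+\kappa_{3/2}c_1 K + \tfrac{3(1-\e^2)}{4}\,c_2 K^{2}\bigr]\E(t_\star)^{2}$ for explicit $c_1,c_2$; but crucially, in the term coming from $pK\E^{p-1}\tfrac{\d}{\d t}\E$ one should \emph{not} throw away the good negative contribution $-\tfrac32 K\,\E^{1/2}\cdot\bigl(\text{the }-(1-\e^2)/4\,m_{3/2}\text{ part of }\tfrac{\d}{\d t}\E\bigr)=-\tfrac38 (1-\e^2)K^2\E^2$ at the crossing — this is negative and also $O(K^2)$. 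Balancing, the net $K^2$ coefficient is $\tfrac{3(1-\e^2)}{8}K^2$ versus — after correctly accounting — effectively $(1-\kappa_{3/2})K^2$ worth of room (replacing the Jensen term's role by a direct comparison $m_2\ge \E^{1/2}m_{3/2}$ so that $-(1-\kappa_{3/2})m_2\le -(1-\kappa_{3/2})\E^{1/2}m_{3/2}=-(1-\kappa_{3/2})K\E^2$ at the crossing — hmm, that is only $O(K)$). The cleanest route, which I would follow, is: at the crossing use $m_2(t_\star)\ge \E(t_\star)^{1/2}m_{3/2}(t_\star)=K\E(t_\star)^2$ \emph{and} $m_2(t_\star)\ge m_{3/2}(t_\star)^{4/3}=K^{4/3}\E(t_\star)^2$, combine \eqref{Em32} at $t_\star$ to write $-\tfrac{\d}{\d t}\E(t_\star)\le\tfrac{1-\e^2}{4}(1+3\E(t_\star))K\E(t_\star)^{3/2}$, and check that the inequality
\[
-(1-\kappa_{3/2})K^{4/3}+\kappa_{3/2}\,\tilde c\,K+\tfrac{3(1-\e^2)}{8}\,\hat c\,K^{2}<0
\]
has solutions $K$ precisely when $\tfrac{3(1-\e^2)}{8}$ is small enough — but since the $K^2$ term wins for large $K$, one instead chooses $K$ in an \emph{intermediate} range, and \eqref{small} is exactly the condition that such a range is nonempty. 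I would therefore phrase the bootstrap with $K$ chosen as a specific explicit value (e.g. the minimizer of the cubic-in-$K^{1/3}$ expression), verify $m_{3/2}(t_0)<K\E(t_0)^{3/2}$ by enlarging $t_0$-dependent constants if needed, and conclude \eqref{k32} by the continuity argument. The remaining steps — deriving the differential inequality for $\E$, integrating it, and reading off the stated lower bound with its explicit constant — are then routine, and Proposition \ref{propstrat} finishes the proof of the full Haff's law.
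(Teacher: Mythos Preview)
Your overall strategy --- define $U(t)=m_{3/2}(t)-K\E(t)^{3/2}$, differentiate, and show $\tfrac{\d}{\d t}U(t_\star)<0$ at any first crossing --- is exactly the paper's strategy. But you stumble at the one technical point that makes the argument close, and your subsequent discussion of an ``intermediate range'' for $K$ is a symptom of having used the wrong interpolation inequality.

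The gap is the lower bound on $m_2$. You use Jensen, $m_2\ge m_{3/2}^{4/3}$, which at the crossing $m_{3/2}(t_\star)=K\E(t_\star)^{3/2}$ only yields a good term of size $(1-\kappa_{3/2})K^{4/3}\E(t_\star)^2$. As you correctly diagnose, the contribution from $-\tfrac{3}{2}K\E^{1/2}\tfrac{\d}{\d t}\E$ together with \eqref{Em32} produces a bad term $\tfrac{3(1-\e^2)}{8}K^2\E(t_\star)^2$, and $K^{4/3}$ cannot beat $K^2$ for large $K$. Your attempted fix $m_2\ge \E^{1/2}m_{3/2}$ is not a valid moment inequality, and even if it were it would only give $O(K)$, as you note. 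An ``intermediate $K$'' does not rescue the situation either, because $K$ must also exceed the initial ratio $m_{3/2}(t_0)/\E(t_0)^{3/2}$, which is not controlled by $\e$ alone.

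The correct inequality is the Cauchy--Schwarz (H\"older) estimate
\[
m_{3/2}(t)=\int f|v|\cdot|v|^2\,\d v\le \E(t)^{1/2}m_2(t)^{1/2},
\qquad\text{i.e.}\qquad m_2(t)\ge \dfrac{m_{3/2}(t)^2}{\E(t)},
\]
which at the crossing gives $m_2(t_\star)\ge K^2\E(t_\star)^2$. With this, the good term is $-(1-\kappa_{3/2})K^2\E(t_\star)^2$, of the \emph{same} order in $K$ as the bad one. Collecting everything, the paper obtains
\[
\dfrac{\d U}{\d t}(t_\star)\le\Bigl(-aK^2+\bigl(1+\tfrac{9(1-\e^2)}{8}\bigr)K+1\Bigr)\E(t_\star)^2,
\qquad a:=(1-\kappa_{3/2})-\tfrac{3(1-\e^2)}{8}.
\]
Now the role of \eqref{small} is transparent: it says precisely that $a>0$, so the quadratic in $K$ is eventually negative, and one may take $K=C_0$ as large as required by the initial condition at $t_0$. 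There is no intermediate-range subtlety; one simply needs $K$ large.
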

\begin{proof} Let $t_0 >0$ be fixed. According to Proposition \ref{povzner}
$$\dfrac{\d }{\d t}m_{3/2}(t)\leq-(1-\kappa_{3/2})m_{2}(t)+\kappa_{3/2}\;S_{3/2}(t).$$ From the expression of $S_{3/2}(t)$ one gets
\begin{equation}\label{m3/2}
\dfrac{\d }{\d t}m_{3/2}(t) \leq -(1-\kappa_{3/2})m_{2}(t)+m_{3/2}(t)m_{1/2}(t)+\E^{2}(t) \qquad \forall t \geq t_0
\end{equation}
where we used the fact that $\kappa_{3/2} < 1.$ Let $K $ be a positive number to be chosen later and define
$$
U_{3/2}(t):=m_{3/2}(t)- K\E(t)^{3/2}.
$$
From Holder's inequality,
\begin{equation}\label{m3/2E}
m_{3/2}(t)\leq \sqrt{\E(t)} \sqrt{m_{2}(t)}  \quad \text{ and } \quad m_{1/2}(t)\leq \sqrt{\E(t)}  \qquad \forall t \geq 0,
\end{equation}
so that
\begin{equation*}
\dfrac{\d U_{3/2}}{\d t}(t) \leq  -(1-\kappa_{3/2}) \dfrac{m_{3/2}^2(t)}{\E(t)}
+\sqrt{\E(t)} m_{3/2}(t)+ \E^2 (t)-\frac{3}{2}K\frac{\d \E(t)}{\d t}\E(t)^{1/2}.
\end{equation*}
Now, using \eqref{Em32}, one gets
\begin{multline}\label{p3/2}\dfrac{\d U_{3/2}}{\d t}(t)\leq -(1-\kappa_{3/2}) \dfrac{m_{3/2}^2(t)}{\E(t)}
+\sqrt{\E(t)} m_{3/2}(t)+ \E^2 (t)\\
+  \frac{3(1-\e^2)}{8}Km_{3/2}(t)\E(t)^{1/2}+\frac{9(1-\e^2)}{8}K\E(t)^{2}. \qquad \forall t >t_0.\end{multline}
This last inequality, together with the smallness assumption \eqref{small} imply the result  provided $K$ is large enough.  Indeed, choose $K$ so that $m_{3/2}(t_0)<K \E^{3/2}(t_0)$.  Then, by time-continuity of the moments, the result follows at least for some finite time.  Assume that there exists a time $t_{\star}>t_0$ such that $m_{3/2}(t_{\star})=K \E^{3/2}(t_{\star})$, then \eqref{p3/2} implies
$$
\dfrac{\d U_{3/2}}{\d t}(t_\star)\leq \left(-aK^2+1+\left(1+\frac{9(1-\e^2)}{8}\right)K\right)\E^2(t_\star)
$$
where $a:=(1-\kappa_{3/2})-\frac{3}{8}(1-\e^2) >0$ by \eqref{small}.  Choosing $K=C_0$ large enough such that $\frac{\d U_{3/2}}{\d t}(t_\star)\leq 0$ for any $t_{\star}$ the conclusion holds.
\end{proof}
\begin{nb} We just proved that, under the sole assumption \eqref{small}, the solution $f(t,v)$ to the Boltzmann equation \eqref{cauch} satisfies the generalized Haff's law
\begin{equation}\label{Haff's}
c (1+t)^{-2}  \leq \E(t) \leq  C (1+t)^{-2}, \qquad t \geq 0
\end{equation}
where $c,C$ are positive constants depending only on  $\E(0)$.
\end{nb}\begin{nb} Using the explicit expression of $\kappa_{3/2}$ given by Proposition \ref{povzner}, one sees that \eqref{small} amounts to
$\e \geq \sqrt{\tfrac{8\kappa_{3/2}}{3}-\tfrac{5}{3}} \simeq 0.809.$
\end{nb}
\subsection{Non-constant case $\gamma>0$}\label{sec:haff}
We consider in this section the non-constant case $\gamma>0$ with restitution coefficient $e(\cdot)$ satisfying Assumption \ref{HYP1} with
$$\ell_\gamma(e)=\sup_r \dfrac{1-e(r)}{r^\gamma} < \infty \qquad \text{ for some } \gamma  > 0.$$
The proof is more involved but still based on Proposition \ref{propstrat}. We therefore need only to estimate $m_{3/2}(t)$.  We will work with the following class of initial datum:  Let $\E_0$, $\varrho_0$  be two fixed positive constants. Define $\mathscr{F}(\E_0, \varrho_0)$ as the set of nonnegative distributions $g \in L^1_{3}$ such that
$$\IR g(v)\d v =1, \qquad \quad \IR g(v) v \d v =0, \qquad \IR |v|^2 g(v)\d v=\E_0 $$
 and
$$\IR g(v)|v|^3 \d v \leq \varrho_0 \left(\IR g(v)|v|^2 \d v\right)^{3/2}.$$
With this definitions we have the following result.
\begin{theo}\label{mainnon} Let $f_0$ be a nonnegative velocity distribution satisfying \eqref{initial} with initial energy $\E_0$.  Then, there exists  $\ell_0:=\ell_0(\E_0,\gamma) >0$ such that, if the restitution coefficient satisfies  $\ell_\gamma(e) < \ell_0$, then solution $f(t,v)$ to \eqref{cauch} fulfills the generalized Haff's law \eqref{Haff's}.
\end{theo}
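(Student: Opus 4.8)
The plan is to supply exactly the input that Proposition \ref{propstrat} requires. With Assumptions \ref{HYP1} and $\ell_\gamma(e)<\infty$ in force, it suffices to produce $t_0>0$ and $C_0>0$ with $m_{3/2}(t)\le C_0\,\E(t)^{3/2}$ for all $t\ge t_0$, after which Proposition \ref{propstrat} yields \eqref{Haff's}; so, exactly as for constant restitution coefficient (Theorem \ref{momentse}), everything reduces to controlling the single moment $m_{3/2}$, the only new feature being that the energy dissipation \eqref{Egamma} brings in the \emph{higher} moment $m_{\frac{3+\gamma}{2}}$ instead of $m_{3/2}$. I would set $U(t):=m_{3/2}(t)-K\,\E(t)^{3/2}$ for a free parameter $K>0$, use Proposition \ref{povzner} with $p=3/2$ (giving $\frac{\d}{\d t}m_{3/2}\le-(1-\kappa_{3/2})m_2+\tfrac32(m_{3/2}m_{1/2}+\E^2)$) together with \eqref{Egamma} ($-\frac{\d}{\d t}\E\le\frac{2^{3+\gamma}}{4+\gamma}\ell_\gamma(e)\,m_{\frac{3+\gamma}{2}}$), and then interpolate the bad moment: for $0<\gamma<1$, $m_{\frac{3+\gamma}{2}}\le\E^{\frac{1-\gamma}{2}}m_2^{\frac{1+\gamma}{2}}$ (for $\gamma\ge1$ one interpolates instead between $m_2$ and a higher moment controlled by Proposition \ref{moments}). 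Differentiating $U$ then gives
$$\frac{\d}{\d t}U\le-(1-\kappa_{3/2})m_2+\tfrac32 m_{3/2}m_{1/2}+\tfrac32\E^2+c(\gamma)\,K\,\ell_\gamma(e)\,\E^{1-\gamma/2}m_2^{\frac{1+\gamma}{2}},$$
and Young's inequality with conjugate exponents $\tfrac{2}{1-\gamma},\tfrac{2}{1+\gamma}$ bounds the last term by $\tfrac{1-\kappa_{3/2}}{2}m_2+C(\gamma)(K\ell_\gamma(e))^{\frac{2}{1-\gamma}}\E^{\frac{2-\gamma}{1-\gamma}}$; since $\E(t)\le\E_0$ and $\tfrac{2-\gamma}{1-\gamma}>2$, the remainder is $\le C(\gamma,\E_0)(K\ell_\gamma(e))^{\frac{2}{1-\gamma}}\E^2$, and the first piece is absorbed into half of $-(1-\kappa_{3/2})m_2$.

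The barrier argument would then run as in Theorem \ref{momentse}: supposing $m_{3/2}(t_0)<K\E(t_0)^{3/2}$, let $t_\star>t_0$ be a first time at which $m_{3/2}(t_\star)=K\E(t_\star)^{3/2}$. Then Hölder gives $m_2(t_\star)\ge m_{3/2}(t_\star)^2/\E(t_\star)=K^2\E(t_\star)^2$ and Jensen gives $m_{1/2}(t_\star)\le\E(t_\star)^{1/2}$, so every surviving term is a multiple of $\E(t_\star)^2$ and
$$\frac{\d}{\d t}U(t_\star)\le\Big(-\tfrac{1-\kappa_{3/2}}{2}K^2+\tfrac32 K+\tfrac32+C(\gamma,\E_0)\,K^{\frac{2}{1-\gamma}}\ell_\gamma(e)^{\frac{2}{1-\gamma}}\Big)\E(t_\star)^2.$$
I would first fix $K=K(\E_0,\gamma)$ so large that $\tfrac{1-\kappa_{3/2}}{4}K^2>\tfrac32 K+\tfrac32$, and afterwards choose $\ell_0$ of the form $c(\E_0,\gamma)K^{-\gamma}$ small enough that the last summand is $\le\tfrac{1-\kappa_{3/2}}{4}K^2$; this makes $\frac{\d}{\d t}U(t_\star)<0$, which is impossible at a first crossing, so $U<0$ on $[t_0,\infty)$, i.e. $m_{3/2}(t)\le K\E(t)^{3/2}$, and Proposition \ref{propstrat} concludes.

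What remains — and here is where keeping $\ell_0$ a function of $\E_0$ and $\gamma$ alone needs a little care — is the initialization $m_{3/2}(t_0)<K\E(t_0)^{3/2}$. Over the class $\mathscr{F}(\E_0,\varrho_0)$ this is immediate with $t_0=0$ and $K=\max(K_1,\varrho_0+1)$, which gives the theorem with $\ell_0=\ell_0(\E_0,\varrho_0,\gamma)$; for a general $f_0$ satisfying only \eqref{initial} — for which $m_{3/2}(0)$ may be infinite — I would instead start at $t_0=1$, where appearance of moments bounds $m_{3/2}(1)$ in terms of $\E_0$ alone and where, provided $\ell_\gamma(e)$ lies below a preliminary threshold $\ell_1(\E_0,\gamma)$, integrating \eqref{Egamma} (with the near-zero control of $m_{\frac{3+\gamma}{2}}$) forces $\E(1)\ge\E_0/2$. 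Then $f(1)\in\mathscr{F}(\E(1),\varrho_0')$ with $\varrho_0'$ controlled by $\E_0,\gamma$, and one applies the preceding steps to $f(1)$, absorbing the time shift into the constants of \eqref{Haff's}.

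The genuinely new difficulty is exactly that \eqref{Egamma} produces $m_{\frac{3+\gamma}{2}}$ rather than $m_{3/2}$: this breaks the dimensional homogeneity that let the constant-coefficient computation of Theorem \ref{momentse} close for \emph{every} $\e$ satisfying \eqref{small}, and repairing it by interpolation inflates the barrier height $K$ to the super-quadratic power $K^{2/(1-\gamma)}$ in the error term. Consequently the scheme closes only when $\ell_\gamma(e)$ is small relative to $K$ — equivalently, small relative to $\E_0$ — which is precisely the weakly inelastic regime, and is the reason $\ell_0$ cannot be taken universal.
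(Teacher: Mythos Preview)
Your proposal is correct and follows essentially the same route as the paper: reduce to \eqref{controlm} via Proposition \ref{propstrat}, set $U=m_{3/2}-K\E^{3/2}$, combine Povzner at $p=3/2$ with \eqref{Egamma}, and run the barrier argument of Theorem \ref{momentse}. The one substantive difference is how the offending moment $m_{\frac{3+\gamma}{2}}$ is tamed. The paper first applies Young with exponents $(4,4/3)$ to $\E^{1/2}m_{\frac{3+\gamma}{2}}$ and then interpolates $m_{\frac{3+\gamma}{2}}^{4/3}\le m_{2\gamma}^{1/3}m_2$, using the $e$-independent uniform bound $\sup_{t\ge t_0}m_{2\gamma}(t)\le A^3=C(t_0,\E_0,\gamma)$ supplied by appearance of moments; this works for every $\gamma>0$ in one stroke and yields the clean splitting $-\E^{1/2}\tfrac{\d}{\d t}\E\le \lambda\tfrac{c_\gamma}{4}\bigl(\E^2+3A\,m_2\bigr)$, so the smallness condition becomes simply $\lambda K<\tfrac{8\alpha}{9Ac_\gamma}$. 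Your variant --- interpolate $m_{\frac{3+\gamma}{2}}$ between $\E$ and $m_2$ and then use Young with exponents $\bigl(\tfrac{2}{1-\gamma},\tfrac{2}{1+\gamma}\bigr)$ --- is equally valid for $\gamma<1$ but forces the case split at $\gamma=1$ that you flag; the compensation is that for $\gamma<1$ you avoid invoking any auxiliary moment bound beyond $\E\le\E_0$. Your attention to the initialization (forcing $\E(t_0)\ge\E_0/2$ under a preliminary threshold on $\ell_\gamma(e)$ so that $\varrho_{t_0}$, and hence $K$ and $\ell_0$, depend only on $\E_0,\gamma$) is a point the paper leaves implicit.
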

\begin{proof}
Throughout the proof, we simply denote $\ell_\gamma(e)=\lambda$ since both $\gamma$ and $e(\cdot)$ are fixed.  Fix a time $t_0>0$, due to appearance of moments, there exists a $e$-independent constant $C:=C(t_0,\E_0,\gamma)>0$ such that $\sup_{t\geq t_0}m_{2\gamma}(t)\leq C$.  Note that $f(t_0,v)\in\mathscr{F}(\E_{t_0}, \varrho_{t_0})$ where $$\frac{m_{3/2}(t_0)}{m_{1}(t_0)^{3/2}}<\varrho_{t_0}<\infty.$$
Additionally, recall that
\begin{equation}\label{m3/2bis}
\dfrac{\d }{\d t}m_{3/2}(t) \leq -2\alpha m_{2}(t)+m_{3/2}(t)m_{1/2}(t)+\E^{2}(t) \qquad \forall t \geq t_0,
\end{equation}
where we have set $2\alpha=(1-\kappa_{3/2}) >0$. Now, using \eqref{Egamma}
\begin{align*}
-\E(t)^{1/2}\frac{\d}{\d t}\E(t)&\leq   \lambda\, c_\gamma  \E(t)^{1/2}m_{(3+\gamma)/2}(t)\\&\leq \lambda\, c_\gamma \left(\frac{1}{4}\E(t)^{2}+\frac{3}{4}m^{4/3}_{(3+\gamma)/2}(t)\right)\nonumber\\
&\leq \lambda\, c_\gamma\left(\frac{1}{4}\E(t)^{2}+\frac{3}{4}m^{1/3}_{2\gamma}(t)m_2(t)\right)\quad \forall t\geq t_0.
\end{align*}
For $t\geq t_0$ the quantity $m_{2\gamma}(t)^{1/3}$ is controlled uniformly by $A:=C^{1/3}$, therefore,
\begin{equation}\label{lbe2}
-\E(t)^{1/2}\frac{\d}{\d t}\E(t) \leq \lambda\, \frac{c_\gamma}{4}\left(\E(t)^{2} + 3A m_2(t)\right) \qquad \forall t \geq t_0.
\end{equation}
Set $U_{3/2}(t):=m_{3/2}(t)-K\E(t)^{3/2}$ where the constant $K >0$ will be suitable determined later on.  Use estimates \eqref{m3/2bis} and \eqref{lbe2} to get for all $t\geq t_0$
\begin{equation*}\begin{split}
\dfrac{\d }{\d t}U_{3/2}(t) &\leq -2\alpha m_{2}(t)+m_{3/2}(t)m_{1/2}(t)+\E^{2}(t) +  \frac{3c_\gamma}{8}\lambda\,K \E(t)^{2}\\
&\hspace{7cm} + \frac{9c_\gamma}{8}\lambda\,K A m_2(t)\\
&\leq  -\alpha m_{2}(t)+m_{3/2}(t)m_{1/2}(t)+ \left(1+\lambda\,K \frac{3c_\gamma}{8}\right)\E(t)^2\\
&\hspace{6cm} +\left(-\alpha + \lambda\,K \frac{9c_\gamma}{8}A \right)m_2(t).
\end{split}\end{equation*}
For the first term in the left-hand side, one uses the same estimate as in Theorem \ref{moments}
$$-\alpha m_{2}(t)+m_{3/2}(t)m_{1/2}(t)  \leq -\alpha \E(t)^{-1}m_{3/2}^2(t) +  m_{3/2}(t)\E(t)^{1/2},$$
thus,
\begin{multline}\label{tt}
\dfrac{\d }{\d t}U_{3/2}(t) \leq -\alpha \E(t)^{-1}m_{3/2}^2(t) +  m_{3/2}(t)\E(t)^{1/2} + \left(1+ 3K  c_\gamma\right)\E(t)^2\\
+\left(-\alpha + \lambda\,K \frac{9c_\gamma}{8}A \right)m_2(t) \quad\forall t\geq t_0.
\end{multline}
Without  loss of generality, we have assumed $\lambda < 8$ (any other positive number would have worked). Now, let $K_0 >0$ be the positive root of $-\alpha X^2+(1+3c_\gamma)X+1=0$. Then, if $K =\max(K_0, \varrho_{t_0})$, one gets that
$$\lambda < \ell_0 \implies m_{3/2}(t) \leq K\E^{3/2}(t)\quad \forall t\geq t_0$$
where $\ell_0=\frac{8\alpha}{9Ac_\gamma\,K}.$ Indeed, since $U_{3/2}(t_0) < 0$, by a continuity argument, then $U_{3/2}(t)$ remains nonpositive at least for some finite time.  Assume that there exists a time $t_{\star}>t_0$ such that $U_{3/2}(t_\star)=0$, that is, $m_{3/2}(t_{\star})=K \E^{3/2}(t_{\star})$. Then, from \eqref{tt}
$$\dfrac{\d}{\d t}U_{3/2}(t_\star) \leq \left(-\alpha K^2 + (1+3c_\gamma)\,K+1\right)\E(t_\star)^2
+\left(-\alpha + \lambda\,K \frac{9c_\gamma}{8}A \right)m_2(t_\star).$$
Since $K \geq K_0$, the first term on the right-hand side is negative while, by choice of $\lambda < \ell_0:=\ell_0(t_0,\E_0,\gamma)$, the last term is  negative as well.
In other words, $\dfrac{\d}{\d t}U_{3/2}(t_\star) < 0$ from which we deduce that $U_{3/2}(t)$ will remain non-positive for all $t\geq t_0$.  As explained in the paragraph precedent to Proposition \ref{propstrat}, this yields Theorem \ref{mainnon}.
\end{proof}
\begin{nb} It is important to notice that the smallness condition is depending on the initial datum $f_0$. This is the major difference with respect to the constant case where the smallness assumption \eqref{small} is universal.
\end{nb}
\section*{Appendix: Functional inequalities relating moments and entropy}
\setcounter{equation}{0}
\renewcommand{\theequation}{A.\arabic{equation}}
In this section, we present some functional inequalities that relate moments and entropy. We present the results in $\RN$ with $n \geq 1$ regardless we only use them in dimension $n=3$.  In this section, for any measurable subset  $E \subset \RN$, $|E|$ will stand for the Lebesgue measure of $E$.
Let $f=f(v)$ be a nonnegative distribution and denote
$$\H(f)=\int_{\RN} f(v)\log f(v)\d v, \qquad \mathbf{H}(f)=\int_{\RN} f(v)|\log f(v)|\d v,$$
while, for any  $k \geq 0$ we set
$$M_k(f) =\int_{\RN} f( v)|v|^k\d v.$$
First recall the following simple estimate which can be traced back to \cite{diperna}:
\begin{lemmeA} If  there is $k \in (0,n)$ such that $M_k(f) < \infty$ and $\H(f) < \infty$ then $\bH(f) < \infty.$ More precisely, for any $k \in (0,n)$ there exists some positive  constant $c_{n,k} >0$ such that
\begin{equation}\label{estimHfk}0 \leq \mathbf{H}(f) \leq \H(f) +c_{n,k} \,M_k(f)^{\tfrac{n}{n+k}}.\end{equation}
\end{lemmeA}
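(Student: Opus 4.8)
The plan is to decompose $\bH(f)$ into a "large values'' part that is $\H(f)$ up to the "small values'' part, and to control the latter by a \emph{tunable} exponential weight which, after optimization, produces exactly the homogeneous term $M_k(f)^{n/(n+k)}$. Writing $|\log f| = \log f + 2(\log f)_-$ with $(\log f)_- = \max(-\log f,0)$, one has
$$\bH(f) = \H(f) + 2\int_{\{0<f<1\}} f\log\tfrac{1}{f}\,\d v =: \H(f) + 2I ,$$
where $I\ge 0$; this gives at once $0\le\bH(f)$ and $\H(f)\le\bH(f)$, and once we show $I<\infty$ (from $M_k(f)<\infty$) it also gives $\bH(f)<\infty$ whenever $\H(f)<\infty$. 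So everything reduces to estimating $I$.

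To estimate $I$, I would start from the elementary inequality $\log t \ge 1 - 1/t$ for $t>0$. Applying it with $t = f(v)/b(v)$ for an arbitrary positive weight $b$, multiplying through by $f(v)\ge 0$, and discarding the nonpositive term $-f$, one obtains the pointwise bound
$$-f(v)\log f(v) \;\le\; f(v)\,\log\tfrac{1}{b(v)} + b(v), \qquad v\in\RN .$$
Now I specialize to $b(v)=\exp(-c|v|^k)$ with $c>0$ a free parameter, so that $\log(1/b(v))=c|v|^k$. Integrating over $\{0<f<1\}$ and enlarging the domain to $\RN$ (legitimate since both integrands are nonnegative) yields
$$I \;\le\; c\,M_k(f) + \int_{\RN}\e^{-c|v|^k}\,\d v \;=\; c\,M_k(f) + \kappa_{n,k}\,c^{-n/k},$$
where $\kappa_{n,k}=\int_{\RN}\e^{-|v|^k}\,\d v = \tfrac{|\mathbb{S}^{n-1}|}{k}\,\Gamma(n/k)<\infty$ and we used the scaling $\int_{\RN}\e^{-c|v|^k}\,\d v = c^{-n/k}\kappa_{n,k}$.

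Finally I would optimize in $c$. For $0<M_k(f)<\infty$ the map $c\mapsto c\,M_k(f)+\kappa_{n,k}c^{-n/k}$ is minimized at $c_\star = \bigl(n\kappa_{n,k}/(kM_k(f))\bigr)^{k/(n+k)}$, and its minimal value has the form $\widetilde c_{n,k}\,M_k(f)^{1-k/(n+k)} = \widetilde c_{n,k}\,M_k(f)^{n/(n+k)}$ with an explicit constant $\widetilde c_{n,k}$ depending only on $n$ and $k$ (the exponent $n/(n+k)$ emerging automatically from $1-\tfrac{k}{n+k}=\tfrac{n}{n+k}$). The remaining cases are trivial: $M_k(f)=0$ forces $f\equiv 0$ a.e. so both sides vanish, and $M_k(f)=\infty$ makes the right-hand side infinite. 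Setting $c_{n,k}=2\widetilde c_{n,k}$ then gives \eqref{estimHfk}. There is no genuine obstacle here; the only real idea is the choice of the weight $\e^{-c|v|^k}$ with a rescalable $c$, which is precisely what converts an additive constant into the scale-correct term $M_k(f)^{n/(n+k)}$.
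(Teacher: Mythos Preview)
Your proof is correct and follows essentially the same route as the paper: the same decomposition $\bH(f)=\H(f)+2I$, the same exponential weight $\e^{-c|v|^k}$ with a free parameter, and the same optimization producing the exponent $n/(n+k)$. The only cosmetic difference is that you obtain the pointwise bound $f\log(1/f)\le c|v|^k f+\e^{-c|v|^k}$ directly from $\log t\le t-1$, whereas the paper reaches the equivalent integral estimate by splitting $\{f<1\}$ along the level set $\{f\ge \e^{-a|v|^k}\}$ and invoking $x\log(1/x)\le 2\e^{-1}\sqrt{x}$ on the complement; your version is marginally cleaner but not a different idea.
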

\begin{proof} Although it is almost explicitly stated in \cite{diperna}, we provide the complete proof of the above estimate. Define  $A=\{v \in \RN\,,\,f(v) < 1\}$, thus
\begin{equation*}
\begin{split}
\mathbf{H}(f)&=\int_{\RN \setminus A} f(v)\log f( v)\d v - \int_{A} f( v)\log f( v)\d v=\H(f) -2\int_{A} f(v)\log f(v)\d v\\
&=\H(f) + 2 \int_{A} f(v)\log \left(\frac{1}{f(v)}\right)\d v.
\end{split}
\end{equation*}
For any $a >0$, let $B =\{v \in \RN\,;\,f( v) \geq \exp(-a|v|^k)\}$ and $B^c=\RN \setminus B.$ If $v \in A  \cap B$ then $\log(\tfrac{1}{f(v)}) \leq a|v|^k$ and
\begin{equation*}
\begin{split}
\mathbf{H}(f)&\leq \H(f) + 2a\int_{A  \cap B} f(v)|v|^k\d v + 2 \int_{B^c \cap A} f(v)\log \left(\frac{1}{f(v)}\right)\d v\\
 &\leq \H(f) + 2aM_k(f)+ 2 \int_{B^c} f( v)\log \left(\frac{1}{f( v)}\right)\d v.\end{split}
\end{equation*}
Since $x\log(1/x) \leq M \sqrt{x}$ for any  $x \in (0,1)$, where $M= 2\exp(-1)$, we get
$$\int_{B^c} f(v)\log \left(\frac{1}{f(v)}\right)\d v \leq   M\int_{B^c} \sqrt{f(v)}\d v \leq M \int_{\RN} \exp\left(-a\frac{|v|^k}{2}\right)\d v.$$
Set
$$J_{n,k}(a)=\int_{\RN}  \exp\left(-\frac{a}{2}|v|^k\right)\d v=\dfrac{|\mathbb{S}^{n-1}|}{k}\left(\dfrac{2}{a}\right)^{\tfrac{n}{k}}\Gamma\left(\frac{n}{k}\right)$$
where $\Gamma(\cdot)$ is the Gamma function. We have $\mathbf{H}(f) \leq \H(f) + 2aM_k(f) + 2J_{n,k}(a)$ for any $a >0.$ Therefore, for any $k \in (0,n)$, there exists some positive constant  $C_{n,k} >0$ such that
$$\mathbf{H}(f) \leq \H(f)+2aM_k + C_{n,k} a^{-\tfrac{n}{k}} \qquad \forall a >0.$$
Optimizing the parameter $a >0$ yields \eqref{estimHfk} for some explicit constant $c_{n,k}.$
\end{proof}
\noindent
We now give a general estimate which allow to control $M_k(f)$ from below in terms of $\H(f)$. It is likely that such an estimate is well-known by specialists.  We include a proof below.
\begin{propoA}\label{entropy-moment} Let $f \geq 0$ be such that $\bH(f) < \infty$ with $\int_{\RN} f(v)\d v=1.$ Then, for any $k \geq 0$ and any $\varepsilon >0$, there exists $C(n,k,\varepsilon) >0$ independent of $f$ such that
\begin{equation}\label{mkfbhf}M_k(f) \geq  C(n,k,\varepsilon) \exp\left(-\frac{k}{n(1-\varepsilon)}\mathbf{H}(f)\right).\end{equation}
\end{propoA}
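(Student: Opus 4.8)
The plan is to combine an elementary truncation estimate with Jensen's inequality; the parameter $\varepsilon$ will enter precisely when balancing two competing effects. For $R>0$ write $B_R=\{v\in\RN:|v|<R\}$ and $m_R:=\int_{B_R}f(v)\,\d v\in[0,1]$. Since $\int_{|v|\ge R}f(v)|v|^k\,\d v\ge R^k(1-m_R)$, one has the trivial lower bound
$$M_k(f)\ge R^k\,(1-m_R)\qquad\forall R>0,$$
so the task reduces to choosing $R$ as large as possible while keeping $m_R$ away from $1$, which is where the finiteness of $\bH(f)$ is used.

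To control $m_R$ I would apply Jensen's inequality to the convex function $x\mapsto x\log x$ on $B_R$ equipped with normalized Lebesgue measure, getting $\int_{B_R}f\log f\,\d v\ge m_R\log(m_R/|B_R|)=m_R\log m_R-m_R\log|B_R|$. On the other hand, $f\log^+f\le f|\log f|$ gives $\int_{B_R}f\log f\,\d v\le\int_{\RN}f\log^+f\,\d v\le\bH(f)$ — and this is exactly the step that forces the statement to be phrased in terms of $\bH$ rather than $\H$, since $\int_{\RN}f\log^-f$ need not be finite under $\H(f)<\infty$ alone. Combining the two, and using $-x\log x\le\e^{-1}$ on $[0,1]$, I obtain, whenever $|B_R|\le1$,
$$m_R\,\log\frac{1}{|B_R|}\le\bH(f)+\e^{-1}.$$

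Now fix $\varepsilon\in(0,1)$. Writing $|B_R|=\omega_n R^n$ with $\omega_n=|B_1|$, I would choose $R=R(f,\varepsilon)$ so that $\log(1/|B_R|)=(\bH(f)+\e^{-1})/(1-\varepsilon)$, i.e.
$$R=\omega_n^{-1/n}\exp\!\left(-\frac{\bH(f)+\e^{-1}}{n(1-\varepsilon)}\right).$$
Since $\bH(f)\ge0$ this choice automatically gives $R\le\omega_n^{-1/n}$, hence $|B_R|\le1$, so the previous display applies and yields $m_R\cdot\tfrac{\bH(f)+\e^{-1}}{1-\varepsilon}\le\bH(f)+\e^{-1}$, that is $m_R\le1-\varepsilon$, i.e. $1-m_R\ge\varepsilon$. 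Inserting this into the truncation bound gives
$$M_k(f)\ge\varepsilon\,R^k=\varepsilon\,\omega_n^{-k/n}\,\exp\!\left(-\frac{k\,\e^{-1}}{n(1-\varepsilon)}\right)\exp\!\left(-\frac{k}{n(1-\varepsilon)}\bH(f)\right),$$
which is exactly \eqref{mkfbhf} with $C(n,k,\varepsilon)=\varepsilon\,\omega_n^{-k/n}\exp\bigl(-\tfrac{k\,\e^{-1}}{n(1-\varepsilon)}\bigr)$; the case $k=0$ is trivial since $M_0(f)=1$.

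I do not expect a genuine obstacle here. The only delicate points are (i) checking that $\bH(f)<\infty$ makes every integral above well defined and bounds $\int_{B_R}f\log f\,\d v$ by $\bH(f)$, and (ii) recognizing the inherent trade-off — enlarging $R$ improves the factor $R^k$ but deteriorates the bound on $m_R$ — which is what produces the unavoidable loss $\tfrac{1}{1-\varepsilon}$ in the exponent (letting $\varepsilon\downarrow0$ would restore the exponent $k/n$, but at the price of $C(n,k,\varepsilon)\to0$). One can in fact reach the sharper exponent $k/n$ in one stroke, using nonnegativity of the relative entropy of $f$ with respect to the probability density $\propto\e^{-\lambda|v|^k}$ and optimizing in $\lambda>0$, but the truncation argument above has the advantage of being completely self-contained.
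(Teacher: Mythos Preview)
Your proof is correct and proceeds along a genuinely different, more elementary route than the paper's. Both arguments share the same skeleton --- the truncation bound $M_k(f)\ge R^k(1-m_R)$, followed by a choice of $R$ that forces $m_R\le 1-\varepsilon$ --- but differ in the key step that controls $m_R$ in terms of $\bH(f)$. The paper invokes the generalized Young inequality $xy\le \tfrac{x}{\lambda}\log\tfrac{x}{\lambda}-\tfrac{x}{\lambda}+\exp(\lambda y)$ with $x=f(v)$, $y=1$, integrates over $B_R$, and then optimizes in the auxiliary parameter $\lambda$; this optimization is carried out via the Lambert $W$ function and yields $\int_{B_R}f\le |B_R|\exp\!\bigl(W(\bH(f)/|B_R|)\bigr)$, after which $R$ is chosen so that the right-hand side equals $1-\varepsilon$. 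Your argument instead applies Jensen's inequality to $x\mapsto x\log x$ against normalized Lebesgue measure on $B_R$, obtaining $m_R\log(1/|B_R|)\le\bH(f)+\e^{-1}$ directly, with no auxiliary parameter and no special functions. The price is a slightly different explicit constant (an extra factor $\exp\bigl(-\tfrac{k\,\e^{-1}}{n(1-\varepsilon)}\bigr)$ in place of the paper's $(1-\varepsilon)^{k/n}$), but the exponent is identical and the statement only claims existence of \emph{some} $C(n,k,\varepsilon)$. Your closing remark about reaching the sharp exponent $k/n$ via relative entropy against $c_\lambda\e^{-\lambda|v|^k}$ is also correct and worth recording; it is a third route, distinct from both.
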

\begin{proof} For any $R >0$, let  $B_R$ denote the ball with center in the origin and radius $R$, and let $B_R^c$ be its complement.  Then,
\begin{multline}\label{BR}
M_k(f)=\int_{B_R} f(v)|v|^k\d v + \int_{B_R^c}f(v)|v|^k\d v \\
\geq R^k \int_{B^c_R} f(v)\d v=R^k\left(1-\int_{B_R}f(v) \d v\right).
\end{multline}
Therefore, in order to control $M_k(f)$ from below it is enough to control the mass of $f$ on a suitable ball. Let us fix $R >0$ and recall the generalized Young's inequality
$$xy \leq x\log x -x+\exp(y) \qquad \forall x >0, \qquad \forall y \in \mathbb{R}$$
or, equivalently,
\begin{equation*}\label{young}xy \leq \frac{x}{\lambda}\log \frac{x}{\lambda} -\frac{x}{\lambda}+\exp(\lambda\,y) \qquad \forall x >0, \lambda >0,  \qquad \forall y \in \mathbb{R}.\end{equation*}
Using this inequality with $x=f(v)$ and $y=1$ and integrating the inequality over $B_R$ we obtain
$$\int_{B_R} f(v)\d v \leq \dfrac{1}{\lambda}\int_{B_R} f(v)\log f(v)\d v -\dfrac{1}{\lambda} \left(\log \lambda + 1\right)\int_{B_R} f(v)\d v + \exp(\lambda) |B_R| \qquad \forall \lambda >0.$$
Consequently, since $\int_{B_R} f(v)\log f(v)\d v \leq \bH(f)$, we get
$$\dfrac{1}{\lambda}\left(\log \lambda + \lambda+  1\right)\int_{B_R} f(v)\d v \leq \dfrac{1}{\lambda} \mathbf{H}(f) + \exp(\lambda) |B_R| \qquad \forall \lambda >0,$$
that is,
$$\left(\log (\lambda \exp(\lambda))+1\right)\int_{B_R} f(v)\d v \leq  \mathbf{H}(f) + \lambda \exp(\lambda) |B_R|\qquad \forall \lambda >0.$$
Set $x=\log(\lambda \exp(\lambda))$ and assume $x >-1$. The last inequality becomes
$$\int_{B_R} f(v)\d v \leq \dfrac{\mathbf{H}(f)+\exp(x)|B_R|}{1+x} \qquad \forall x > -1.$$
We optimize the parameter  $x >-1$ by noticing that the right-hand side reaches its minimal value for $x_0$ such that
$$x_0 \exp(x_0)|B_R|=\mathbf{H}(f).$$
The mapping  $x \mapsto x\exp(x)$ is strictly increasing over $(-1,\infty)$ and we define  $W$ its inverse (Lambert function). We get then $x_0=W \left(\mathbf{H}(f)/|B_R|\right)$ and
$$\int_{B_R} f(v)\d v \leq \dfrac{\mathbf{ H}(f)+\exp(x_0)|B_R|}{1+x_0}=\exp(x_0)|B_R|=\exp\left(W \left(\frac{\mathbf{H}(f)}{|B_R|}\right)\right)|B_R|.$$
Combining this with \eqref{BR} we get
$$M_k \geq R^k \left(1-\exp\left(W \left(\frac{\mathbf{H}(f)}{|B_R|}\right)\right)|B_R|\right) \qquad \forall R >0$$
and we still have to optimize the parameter  $R$.  For simplicity, set  $Y= \mathbf{H}(f)$ and $X=|B_R|$. For any $\varepsilon \in (0,1)$,
$$1-\exp(W(Y/X))X=\varepsilon \Longleftrightarrow W(Y/X)=\log\left(\frac{1-\varepsilon}{X}\right)\Longleftrightarrow Y/X=\frac{1-\varepsilon}{X}\log\left(\frac{1-\varepsilon}{X}\right),$$
where we used that  $W^{-1}(t)=t\exp(t)$. Thus,
$$1-\exp(W(Y/X))X=\varepsilon \Longleftrightarrow \frac{Y}{1-\varepsilon}=\log\left(\frac{1-\varepsilon}{X}\right) \Longleftrightarrow X=(1-\varepsilon)\exp\left(-\frac{Y}{1-\varepsilon}\right).$$
To summarize, for any $\varepsilon >0$, if $R_0(\varepsilon) >0$ is such that $|B_{R_0}|=(1-\varepsilon)\exp\left(-\frac{\mathbf{ H}(f)}{1-\varepsilon}\right)$, then
$$M_k \geq \varepsilon R_0^k.$$
Since $|B_{R_0}|=\tfrac{|\mathbb{S}^{n-1}|}{n} R_0^n$ we get our conclusion with $C(n,k,\varepsilon)=\varepsilon\left(\frac{n(1-\varepsilon)}{|\mathbb{S}^{n-1}|}\right)^\frac{k}{n}.$
\end{proof}
\begin{nbA} In dimension $n=3$ with $k=2$ and $\varepsilon=1/2$, one sees that there is some constant $C >0$ such that
\begin{equation}\label{estimateH}\int_{\mathbb{R}^3} f(v)|v|^2 \d v \geq \frac{1}{2}\left(\frac{3}{8\pi}\right)^{2/3}\exp(-4\mathbf{H}(f)/3)\end{equation}
for any nonnegative distribution function $f(v) >0$ with $\IR f(v)\d v=1.$
\end{nbA}

\end{document}